\newcommand*{\Relbarfill@}{\arrowfill@\Relbar\Relbar\Relbar}
\newcommand*{\xeq}[2][]{\ext@arrow 0055\Relbarfill@{#1}{#2}}
\LetLtxMacro{\oldsqrt}{\sqrt}
\renewcommand{\sqrt}[2][]{\,\oldsqrt[#1]{#2}\,}
\def\@tocline#1#2#3#4#5#6#7{\relax
  \ifnum #1>\c@tocdepth 
  \else
    \par \addpenalty\@secpenalty\addvspace{#2}%
    \begingroup \hyphenpenalty\@M
    \@ifempty{#4}{%
      \@tempdima\csname r@tocindent\number#1\endcsname\relax
    }{%
      \@tempdima#4\relax
    }%
    \parindent\z@ \leftskip#3\relax \advance\leftskip\@tempdima\relax
    \rightskip\@pnumwidth plus4em \parfillskip-\@pnumwidth
    #5\leavevmode\hskip-\@tempdima
      \ifcase #1
       \or\or \hskip 1em \or \hskip 2em \else \hskip 3em \fi%
      #6\nobreak\relax
    \dotfill\hbox to\@pnumwidth{\@tocpagenum{#7}}\par
    \nobreak
    \endgroup
  \fi}
\def\bmu{\boldsymbol \mu}
\def\greekbolds#1{%
 \@for\next:=#1\do{%
    \def\X##1;{%
     \expandafter\def\csname V##1\endcsname{\boldsymbol{\csname##1\endcsname}}
     }
   \expandafter\X\next;
  }
}
\def\make@bb#1{\expandafter\def
  \csname bb#1\endcsname{{\mathbb{#1}}}\ignorespaces}
\def\make@bbm#1{\expandafter\def
  \csname bb#1\endcsname{{\mathbbm{#1}}}\ignorespaces}
\def\make@bf#1{\expandafter\def\csname bf#1\endcsname{{\bf
      #1}}\ignorespaces} 
\def\make@gr#1{\expandafter\def
  \csname gr#1\endcsname{{\mathfrak{#1}}}\ignorespaces}
\def\make@scr#1{\expandafter\def
  \csname scr#1\endcsname{{\mathscr{#1}}}\ignorespaces}
\def\make@cal#1{\expandafter\def\csname cal#1\endcsname{{\mathcal
      #1}}\ignorespaces} 
\def\do@Letters#1{#1A #1B #1C #1D #1E #1F #1G #1H #1I #1J #1K #1L #1M
                 #1N #1O #1P #1Q #1R #1S #1T #1U #1V #1W #1X #1Y #1Z}
\def\do@letters#1{#1a #1b #1c #1d #1e #1f #1g #1h #1i #1j #1k #1l #1m
                 #1n #1o #1p #1q #1r #1s #1t #1u #1v #1w #1x #1y #1z}
\newcommand{\abs}[1]{\lvert #1 \rvert}
\newcommand{\zmod}[1]{\mathbb{Z}/ #1 \mathbb{Z}}
\newcommand{\wh}{\widehat}
\newcommand{\Lsymb}[2]{\genfrac{(}{)}{}{}{#1}{#2}}  
\newcommand{\qalg}[3]{\left(\frac{#1, #2}{#3}\right)}
\newcommand{\sg}{\mathrm{sg}}
\newcommand{\scc}{\mathrm{sc}} 
\newcommand{\ddiv}{\vert}
\newcommand{\bsh}{\backslash}
\newcommand{\whZ}{\widehat{\mathbb{Z}}}
\newcommand{\whD}{\widehat{\mathfrak{D}}}
\newcommand{\whE}{\widehat{\mathcal{E}}}
\newcommand{\whF}{\widehat{F}}
\newcommand{\whK}{\widehat{K}}
\newcommand{\whO}{\widehat{O}}
\newcommand{\whJ}{\widehat{J}}
\newcommand{\wcO}{\widehat{\mathcal{O}}}
\newcommand{\cf}{\mathbbm{1}} 
 \newcommand{\conGa}{\{\Gamma\}} 
 \newcommand{\conGab}{\{\Gamma(b)\}} 
\newcommand{\wcEo}{\widehat{\mathcal{E}}^{\, 1}}
\newcommand{\wcE}{\widehat{\mathcal{E}}}
\DeclareMathOperator{\sgn}{sgn}
\DeclareMathOperator{\Stab}{Stab}
\DeclareMathOperator{\Frac}{Frac}
\DeclareMathOperator{\Vol}{Vol}
\DeclareMathOperator{\Gen}{Gen}
\DeclareMathOperator{\Emb}{Emb}
\DeclareMathOperator{\SG}{SG}
\DeclareMathOperator{\SCl}{SCl}
\DeclareMathOperator{\Mass}{Mass}
\DeclareMathSymbol{\twoheadrightarrow} {\mathrel}{AMSa}{"10}
\DeclareMathOperator{\Cl}{Cl}
\DeclareMathOperator{\End}{End}
\DeclareMathOperator{\Hom}{Hom}
\DeclareMathOperator{\Gal}{Gal}
\DeclareMathOperator{\Tr}{Tr}
\DeclareMathOperator{\Nm}{N}  
\DeclareMathOperator{\Nr}{Nr}
\newcommand{\Z}{\mathbb Z}
\newcommand{\Q}{\mathbb Q}
\newcommand{\R}{\mathbb R}
\newcommand{\F}{\mathbb F}
\renewcommand{\grD}{D}
\renewcommand{\whD}{\widehat{D}}
\DeclareMathOperator{\mass}{mass}
\DeclareMathOperator{\Cls}{Cls}
\DeclareMathOperator{\nrd}{nrd}
\DeclareMathOperator{\SpnGen}{SpnGen}
\newcounter{thmcounter} 
\numberwithin{thmcounter}{section}  
\newtheorem{thm}[thmcounter]{Theorem}
\newtheorem{lem}[thmcounter]{Lemma}
\newtheorem{cor}[thmcounter]{Corollary}
\newtheorem{prop}[thmcounter]{Proposition}
\theoremstyle{definition}
\newtheorem{defn}[thmcounter]{Definition}
\newtheorem{ex}[thmcounter]{Example}
\newtheorem{rem}[thmcounter]{Remark}
\numberwithin{equation}{section}
\numberwithin{figure}{section}
\numberwithin{table}{section}
\newtheoremstyle{notitle}  
  {}
  {}
  {\itshape}
  {}
  {}
  {\ }
  {.5em}
  {}
\theoremstyle{notitle}
 \title[Trace formulas for norm one groups of quaternion algebras]{Trace formulas for the norm one group of totally
   definite quaternion algebras}
\author{Jiangwei Xue}
\address{(Voight) Department of Mathematics, Dartmouth College, Kemeny Hall, Hanover, NH 03755, USA}
\email{jvoight@gmail.com}
\address{(Xue) Collaborative Innovation Center of Mathematics, School of
  Mathematics and Statistics, Wuhan University, Luojiashan, 430072,
  Wuhan, Hubei, P.R. China}   
\address{(Xue) Hubei Key Laboratory of Computational Science (Wuhan
  University), Wuhan, Hubei,  430072, P.R. China.}
\email{xue\_j@whu.edu.cn}
\author{Chia-Fu Yu}
\address{(Yu) Institute of Mathematics,
  Academia Sinica and NCTS, Astronomy-Mathematics
  Building, No. 1, Sec. 4, Roosevelt Road, Taipei 10617, Taiwan}
\email{chiafu@math.sinica.edu.tw} 
\let\@wraptoccontribs\wraptoccontribs
\begin{document}
\date{\today} 
 \subjclass[2020]{11R52, 11R29, 11F72} 
 \keywords{class number formula, trace formula, selectivity, totally
   definite quaternion algebra.}

 \begin{abstract}
    In his pioneering work [Crelle's Journal, 1955], Eichler established the theory of trace formulas for Brandt matrices of quaternion orders. From it he derived a class number formula for Eichler orders in a totally definite quaternion algebra $D$. Extending Eichler's work, Pizer [Crelle's Journal, 1973] proved a formula for the type number of Eichler orders in $D$. In this paper, we extend their results to the norm one group of $D$. More precisely, we present a class number formula for the norm one group of $D$ with respect to a class of orders $\mathcal{O}$, called \emph{residually unramified orders}, which includes all Eichler orders. Our second result gives a formula for the number of  ideal classes in the spinor class of $\mathcal{O}$, which refines Eichler's class number formula. 
    It is worth mentioning that these class number formulas not only depend  on the genus of orders as Eichler and Pizer's formulas, but also depend on the orders themselves. 
    We introduce certain auxiliary invariants in order to keep track of the global information on the relationship between certain CM orders and $\calO$, and use them to describe our formulas. Both our class
    number formulas make use of  the  optimal spinor selectivity theory for quaternion orders.   
 \end{abstract}

\maketitle




\section{Introduction}


Let $F$ be a totally real number field, and $\grD$ be a \emph{totally
  definite} quaternion $F$-algebra, that is,
$\grD\otimes_{F, \sigma}\R$ is isomorphic to the Hamilton quaternion
algebra $\bbH$ for every embedding $\sigma: F\hookrightarrow \R$.  Let $O_F$ be the ring of integers
of $F$, and $\calO$ be an $O_F$-order (of full rank) in $\grD$. By
definition, the
class number $h(\calO)$  is the cardinality of the finite set
$\Cl(\calO)$ of locally principal right $\calO$-ideal
classes in
$\grD$.  If we write $\wcO$ for the profinite completion
of $\calO$, and  $\whD$ for the ring of finite adeles of $\grD$, then
\begin{equation}
  \label{eq:189}
  h(\calO)=\abs{\Cl(\calO)}=\abs{\grD^\times\bsh
    \whD^\times/\wcO^\times}. 
\end{equation}
It is well known that $h(\calO)$ depends only on the genus  of
$\calO$. In other words, $h(\calO')=h(\calO)$ for any other order $\calO'$ \emph{in the same
genus} as $\calO$, that is, there exists $x\in \whD^\times$ such that
$\wcO'=x\wcO x^{-1}$. The class number $h(\calO)$ can 
be computed by the Eichler class number formula
\cite[Corollaire~V.2.5, p.~144]{vigneras} (see also
\cite[Theorem~2]{korner:1987}, \cite[Theorem~30.8.6]{voight-quat-book}
and \cite[Theorem~1.5]{xue-yang-yu:ECNF}).  To state this
formula,  we set up some notations related to certain quadratic
$O_F$-orders called \emph{CM $O_F$-orders}. 

Since $D$ is totally definite, a quadratic field extension $K/F$
embeds into $D$ only if $K/F$ is a CM-extension (that is, a totally
imaginary quadratic extension of the totally real field $F$).  An
$O_F$-order $B$ of full rank in a CM-extension
of $F$ 
 will be  called a \emph{CM $O_F$-order}.  Let $h(B)$ be the class
 number of $B$, and $w(B)$ be the unit group index $[B^\times:
 O_F^\times]$. According to  \cite[Remarks, p.~92]{Pizer1973}
 (cf.~\cite[\S3.1]{li-xue-yu:unit-gp} and \cite[\S3.3]{xue-yang-yu:ECNF})
there are only finitely many CM $O_F$-orders $B$ satisfying $w(B)>1$,  
so we collect them into a finite set $\scrB$.  For each finite prime
$\grp$ of $F$, we write $B_\grp$ and $\calO_\grp$ for the $\grp$-adic
completions of $B$ and $\calO$ respectively. Let  $\Emb(B, \calO)$ be the set of \emph{optimal
  embeddings} of $B$ into $\calO$, that is,
\begin{equation}
  \label{eq:22}
  \Emb(B, \calO):=\{\varphi\in \Hom_F(K, \grD)\mid \varphi(K)\cap \calO=\varphi(B)\}, \ \text{where } K=\Frac(B).
\end{equation}
The unit group $\calO^\times$ acts on $\Emb(B,
\calO)$ from the right by $\varphi\mapsto u^{-1}\varphi u$ for all
$\varphi\in \Emb(B, \calO)$ and $u\in \calO^\times$. The number of
orbits is finite both in the global and local cases, so we put
\begin{equation}
  \label{eq:9}
m(B, \calO, \calO^\times):=\abs{\Emb(B, \calO)/\calO^\times}, \quad
m(B_\grp, \calO_\grp, \calO_\grp^\times):=\abs{\Emb(B_\grp,
  \calO_\grp)/\calO_\grp^\times}.
\end{equation}
Following \cite[\S
  V.2, p.~143]{vigneras}, we write 
\begin{equation}
  \label{eq:131}
M(B):=\frac{h(B)}{w(B)}\prod_\grp m_\grp(B), 
\end{equation}
where the product runs over all finite primes $\grp$ of $F$, and $
m_\grp(B):=m(B_\grp, \calO_\grp, \calO_\grp^\times)$. 
The product is
well-defined since $m_\grp(B)=1$ for almost all $\grp$ (\cite[Theorem~II.3.2]{vigneras}).

\begin{thm}[Eichler class number formula]\label{thm:ECNF}
Let $F, D, \calO$ be as above. Then 
  \[h(\calO)=\Mass(\calO)+\frac{1}{2} \sum_{B\in \scrB}(w(B)-1)M(B). \]
Here $\Mass(\calO)$ denotes the mass of $\calO$ as defined in
\eqref{eq:230}, and it can be computed by 
K\"oner's formula  (\ref{eq:234}). 
\end{thm}



As the name suggests, this formula is first derived by Eichler himself \cite{Eicher:1955} for Eichler orders of square-free level in a
definite quaternion $\Q$-algebra.  It is further generalized by Pizer
\cite{Pizer-1976}, Vign\'eras \cite{vigneras}, K\"orner
\cite{korner:1987} and many others.  In terms of algebraic groups, the
Eichler class number formula can be regarded as a formula for the
class number of the multiplicative group $G$ of $D$ over $F$ (with
respect to a suitable open compact subgroup of $G(\whF)$). Similarly, the type
number formulas of Pizer \cite{Pizer1973} and
K\"orner~\cite{korner:1987} can be interpreted as  class number
formulas for the adjoint group $G^{\rm ad}$. In this paper, we extend
such formulas to the class number formula for the derived group
$G^{\rm der}$ and its variant.  More explicitly, we present two class
number formulas for the following two quantities: 
\begin{equation}
  \label{eq:252}
 h^1(\calO):=\abs{\grD^1\bsh \whD^1/\wcO^1}, \qquad   h_\scc(\calO):=\abs{\grD^\times\bsh
    \big(\grD^\times\whD^1\wcO^\times\big)/\wcO^\times}.
\end{equation}
Here for any set $X\subseteq
 \whD$, we put $X^1:=\{x\in X\mid \Nr(x)=1\}$, where $\Nr: \whD \to \whF$ denotes the reduced norm map.  In particular,  
 $\whD^1=\ker(\whD^\times\xrightarrow{\Nr} \whF^\times)=G^{\mathrm{der}}(\whF)$.
There is a  canonical surjection $\grD^1\bsh \whD^1/\wcO^1\twoheadrightarrow \grD^\times\bsh
(\grD^\times \whD^1 \wcO^\times)/\wcO^\times$, so we always have 
 $h^1(\calO)\geq
h_\scc(\calO)$. 
The main difference between computing $h(\calO)$ (or the type number $t(\calO)$ of $\calO$) and $h^1(\calO)$ (or $h_\scc(\calO)$) is that the class number $h(\calO)$ depends only on the genus of 
$\calO$ while the latter does not in general. Thus for $h^1(\calO)$ and $h_\scc(\calO)$, the input datum             $\calO$ is not purely local and the output class number formulas would be expected to include global information of $\calO$.

Note that if $D$ is indefinite
(that is, unramified at some infinite place of $F$) 
rather than totally definite as we have assumed, then 
$h^1(\calO)=h_\scc(\calO)=1$ by the strong approximation theorem (\cite[Theorem~7.7.5]{hyperbolic-3-mfld} or \cite[Theorem~III.4.3]{vigneras}). This is precisely the reason why we
focus only on the
totally definite case.


The first class number $h^1(\calO)$ can be interpreted as follows. Let
$V$ be an $n$-dimensional right vector space over $D$, and $\psi:
V\times V\to D$ be a  positive definite quaternion  Hermitian form
with respect to the canonical involution $x\mapsto \bar{x}$ of $D$. By convention, 
$\psi$ is $D$-linear in its second variable and anti-linear in its
first variable.   A classical result of Shimura
\cite[\S2.2]{Shimura1963-AltHermForms} shows that  there
exists an identification $V=D^n$ such that $\psi$ takes the
form
\begin{equation}\label{eq:10}
\psi: D^n\times D^n\to D\qquad (x, y)\mapsto \sum_{i=1}^n \bar{x}_iy_i.  
\end{equation}
Let $G:=U_n(D, \psi)$ be the unitary group with respect to $\psi$ as above.
Given a right $\calO$-lattice $L$ in $V$, we write $\wh{L}$ for its
profinite completion $L\otimes_\Z\whZ$. 
Two $\calO$-lattices $L$ and $L'$ in $V$ are said to be \emph{isometric} if
there exists $g\in G(F)$ such that $L'=g L$, and they are
said to be in the \emph{same genus} if there exists $x\in G(\whF)$
such that $\wh{L}'=x\wh{L}$.  The number of isometric classes of
$\calO$-lattices in the genus of $L$ is called the \emph{class number of $L$
  with respect to $\psi$} and will be denoted by $h(L, \psi)$.  If we
write $\Stab(\wh{L})$ for the stabilizer of $\wh{L}$ in $G(\whF)$,
then $h(L, \psi)=\abs{G(F)\bsh G(\whF)/\Stab(\wh{L})}$.  When $n=1$,
the group 
$G$ coincides with the reduced norm one group $\underline{D}^1$ of $D$ (regarded as a
linear algebraic group over $F$), $L$ is a fractional right
$\calO$-ideal in $D$,  and
$\Stab(\wh{L})=\wh{\calO_l(L)}\cap \whD^1$, where $\calO_l(L)$ denotes
the \emph{left order} of $L$:
\begin{equation}
  \label{eq:16}
 \calO_l(L):=\{\alpha\in D\mid
\alpha L \subseteq L\}. 
\end{equation}
 Thus in this case $h(L,
\psi)=h^1(\calO_l(L))$.


The computation of the \emph{class number} $h(L, \psi)$ is a classical
problem that has been studied by many people.  Suppose for the moment
that $F=\Q$ and $\calO$ is a maximal order in $D$. If $\dim_DV=1$,
then the canonical map
$\grD^1\bsh \whD^1/\wcO^1\to D^\times \bsh\whD^\times/\wcO^\times$ is
a bijection because both the narrow class group and the positive unit
group of $\Z$ are trivial, so $h^1(\calO)=h(\calO)$ in this case. In
the $2$-dimensional case, Hashimoto and Ibukiyama
\cite{Hashimoto-Ibukiyama-123} obtained class number formulas for
arbitrary genera of maximal $\calO$-lattices (see
\cite[\S2.3]{Shimura1963-AltHermForms}  for the
notion of maximal lattices). Hashimoto
\cite{Hashimoto-ternary} also obtained a class number formula for the
principal genus (that is, $L=\calO^3$) in dimension $3$ under the further assumption that the discriminant of $D$ is a prime number.  
Suppose now that $F$ is an arbitrary totally real field and keep the assumption that $\calO$ is maximal. In
his habilitation thesis, Kirschmer \cite[\S9]{Kirschmer-Hab-2016} gives a
complete classification of all definite quaternion hermitian lattices
with class number at most two using computer algorithms. For example, in the 1-dimensional case,
he lists 69 totally definite quaternion algebras over $29$ different
base fields $F$ that admit a hermitian lattice of class number
one.  
Independently using a different method, Ibukiyama, Karemaker and Yu
\cite{Ibukiyama-Karemaker-Yu-2022} also give a complete list of
maximal $\calO$-lattices of class number one in the case
$F=\Q$. 



Now return to the general case where $F$ 
 and $\calO$ are both
arbitrary. 
For the meaning of $h_\scc(\calO)$, we recall the following 
notions  from
\cite[\S1]{Brzezinski-Spinor-Class-gp-1983}.
\begin{defn}\label{defn:spinor-genus-class}
  Two $O_F$-orders $\calO$ and $\calO'$ in $D$ are in the   \emph{same
spinor genus} (and denoted by $\calO\sim \calO'$) if there
  exists  $x\in \grD^\times\whD^1$ such that
  $\wcO'= x \wcO x^{-1}$.  Similarly, two locally
  principal right $\calO$-ideals $I$ and $I'$ are in the   \emph{same
spinor class} if there exists  $x\in
\grD^\times\whD^1$ such that $\wh I'= x \wh I$. 
\end{defn}

Note that every genus of $O_F$-orders consists of finitely many spinor genera because any two $O_F$-orders having the same type lie in the same spinor genus and the type number is finite. It is a straightforward exercise to check that both
$h^1(\calO)$ and $h_\scc(\calO)$ depend only on the spinor genus of
$\calO$. If we denote by $\Cl_\scc(\calO)$ the set of locally principal right
$\mathcal{O}$-ideal classes within the spinor class of $\calO$
itself, 
then $h_\scc(\calO)=\abs{\Cl_\scc(\calO)}$, so the subscript $_\scc$
in $h_\scc(\calO)$ stands for ``spinor class''. 
 By definition, $\Cl_\scc(\calO)$ is a subset
of $\Cl(\calO)$. It will be shown in (\ref{eq:254}) that $h(\calO)$
can be expressed as a finite sum of several $h_\scc(\calO')$ for
various orders $\calO'$, so a class number formula for $h_\scc(\calO)$
is naturally a refinement of the Eichler class number formula for
$h(\calO)$.

In a special case, the class number $h_\scc(\calO)$ can also be interpreted as follows. Let $\Cl^+(O_F)$ be the narrow class group of $O_F$. There is a canonical surjective map 
$\Nr: \Cl(\calO)\to \Cl^+(O_F)$ that sends each locally principal right $\calO$-ideal class $[I]$ to the narrow $O_F$-ideal class $[\Nr(I)]_+$.  If $\Nr(\wcO^\times)=\whO_F^\times$ (e.g.~if $\calO$ is an Eichler order), then two locally
  principal right $\calO$-ideals $I$ and $I'$ are in the   same
spinor class if and only if their reduced norms $\Nr(I)$ and $\Nr(I')$ belong to the same narrow $O_F$-ideal class.  In this case, $\Cl_\scc(\calO)$ is the neutral fiber of $\Nr: \Cl(\calO)\to \Cl^+(O_F)$, and $h_\scc(\calO)$ measures its cardinality.  From \eqref{eq:q1}, the cardinality of any other fiber is also given by $h_\scc(\calO')$ for a suitable $\calO'$.

To state our formulas for $h^1(\calO)$ and $h_\scc(\calO)$, we need
some more notations. For each CM $O_F$-order $B$, let $\bmu(B)$ be the group of roots of unity in
$B$. We put 
\begin{equation}
  \label{eq:147}
\scrB^1:=\{B\mid B \text{ is a CM $O_F$-order with }
\abs{\bmu(B)}>2\}.  
\end{equation}
Note that $\scrB^1$ is a subset of the finite set $\scrB$ appearing in
the Eichler class number formula, so it is 
finite as well.  Given a CM $O_F$-order $B$, whether there exists an
optimal embedding of $B$ into some order $\calO'$ in the same spinor
genus as $\calO$ is encoded in the following \emph{optimal spinor
  selectivity symbol}: 
\begin{equation}
  \label{eq:79}
  \Delta(B, \calO)=
  \begin{cases}
    1 \qquad &\text{if } \exists\, \calO'\text{ such that $\calO'\sim \calO$
      and }\Emb(B, \calO')\neq \emptyset, \\
    0 \qquad &\text{otherwise}.
  \end{cases}
\end{equation}
Clearly, if $\Delta(B, \calO)=1$, then $m_\grp(B)\neq 0$ for every
finite prime $\grp$ of $F$. Conversely, whether the latter condition
implies that $\Delta(B, \calO)=1$ or not is the central question of
\emph{the optimal spinor selectivity theory}
\cite{Maclachlan-selectivity-JNT2008, Xue-Yu-Selec-2022,peng-xue:select, M.Arenas-et.al-opt-embed-trees-JNT2018} \cite[\S31]{voight-quat-book}. A fundamental object of this
theory is a class field $\Sigma/F$ attached to the genus
of $\calO$ (see Definition~\ref{defn:spinor-field}). For example, if $m_\grp(B)\neq 0$ for every
finite prime $\grp$ of $F$ and the fractional field $\Frac(B)$ of $B$ is not
contained in $\Sigma$, then we do have $\Delta(B,
\calO)=1$ by Theorem~\ref{thm:selectivity}. Naturally, we
define the following symbol 
\begin{equation}\label{eq:17}
   s(B, \calO)=
  \begin{cases}
    1 & \text{if }  \Frac(B)\subseteq \Sigma;\\
    0 & \text{otherwise}. 
  \end{cases}
\end{equation}
A completely local characterization of the containment $\Frac(B)\subseteq \Sigma$ is
given in Lemma~\ref{lem:eich-inv-nonzero}.  If $s(B, \calO)=1$, then $\Delta(B, \calO)$ can be computed using formulas
\eqref{eq:164}, \eqref{eq:120}
and \eqref{eq:18}.  
Lastly, we write $h(F)$ (resp.~$h^+(F)$) for the wide (resp.~narrow)
class number of $F$. 

Different from the Eichler class number formula,  our current class number
formulas do not apply to any arbitrary $\calO$. Rather, we need
 $\calO$ to be \emph{residually unramified}. See
Definition~\ref{defn:eichler-invariant} for the meaning of this
notion, and Remark~\ref{rem:why-res-unramfied} for why such an assumption is necessary. Nevertheless, since any Eichler order  is residually unramified by \cite[Lemma~24.3.6]{voight-quat-book}, 
our formulas do apply to a large class of arithmetically important quaternion orders.  Moreover, 
$\Nr(\wcO^\times)=\whO_F^\times$ for any residually unramified order
$\calO$ by \eqref{eq:x2}, so the class number formula for $h_\scc(\calO)$ counts the neutral fiber of $\Nr: \Cl(\calO)\to \Cl^+(O_F)$ as explained above.



\begin{thm}\label{thm:class-number-formula}
Suppose that  $\calO$ is a residually
unramified $O_F$-order in $D$. Then
  \begin{align}
   \label{eq:37}
    h^1(\calO)&=2\Mass^1(\calO)+\frac{1}{4h(F)}\sum_{B\in
                \scrB^1}2^{s(B, \calO)}\Delta(B, \calO)(\abs{\bmu(B)}-2)M(B),\\
        h_\scc(\calO)&=\Mass_\scc(\calO)+\frac{1}{2h^+(F)}\sum_{B\in
      \scrB}2^{s(B, \calO)}\Delta(B, \calO)(w(B)-1)M(B). \label{eq:39}
  \end{align}
Here  $\Mass^1(\calO)$ and $\Mass_\scc(\calO)$ can be computed by the
mass formulas \eqref{eq:36} and \eqref{eq:31} respectively.  The
summation in (\ref{eq:37}) ranges over $\scrB^1$ while the one in
(\ref{eq:39}) ranges over $\scrB$. 

If further $D$ is ramified at some finite prime of $F$, then both
$h^1(\calO)$ and $h_\scc(\calO)$ depend only on the genus of $\calO$,
and they are given by 
  \begin{align}
      h^1(\calO)&=2\Mass^1(\calO)+\frac{1}{4h(F)}\sum_{B\in
                  \scrB^1}(\abs{\bmu(B)}-2)M(B),\\
            h_\scc(\calO)&=\Mass_\scc(\calO)+\frac{1}{2h^+(F)}\sum_{B\in
      \scrB}(w(B)-1)M(B).   
  \end{align}
  In this case we have $h_\scc(\calO)=h(\calO)/h^+(F)$. Particularly, $h(\calO)$ is divisible by $h^+(F)$.
\end{thm}


%



If we drop the condition that $D$ is ramified at some finite prime of
$F$, then $h(\calO)$ is not necessarily divisible by $h^+(F)$ anymore;
see \cite[Table~1, p.~676]{xue-yang-yu:ECNF} for some
examples. Nevertheless, as a direct application of \eqref{eq:39}, it has been shown by Yucui Lin and the first
named author in \cite{Lin-Xue:div-cl} that $h(\calO)$ is always
divisible by $h(F)$ regardless of the ramification of $D$ over
$F$. Such divisibility results closely mirror those of CM-extensions \cite[Theorem~4.10]{Washington-cyclotomic}: namely, if $K/F$ is a CM-extension, then $h(K)$ is divisible by $h(F)$, and if $K/F$ is further assumed to be ramified at some finite prime of $F$, then $h(K)$ is divisible by $h^+(F)$.


Among the formulas in Theorem~\ref{thm:class-number-formula}, the ones for  $h^1(\calO)$ are
 most challenging  to compute, so we devote the whole
Section~\ref{sec:class-number-formula} to this task. Comparably, the formulas of
$h_\scc(\calO)$ can be obtained by the same method for the Eichler class number
formula, so they are merely sketched in the second half of
Section~\ref{sec:mass-form}.

Since $D$ is totally definite, $D^1$ is discrete and cocompact in
$\whD^1$. Hence a standard tool for computing $h^1(\calO)$ is the well-known 
Selberg trace formula for compact quotient. This formula 
consists of finitely many orbital integrals, each corresponding to a $D^1$-conjugacy class as in \eqref{eq:33}. Naively, one might expect
that our formula for $h^1(\calO)$ is obtained simply by working out
the orbital integrals one by one. This was the approach when we
tried a few concrete examples, but immediately we are confronted with
certain mysterious cancellations that cannot be explained by such a
method; see Remark~\ref{rem:old-method} for details. To uncover the mechanism behind
this cancellation, we first cut the orbital integrals into pieces as
in \eqref{eq:59}, with each piece giving rise to optimal embeddings of
the same CM $O_F$-order $B$ into $\calO$.  The pieces with the same
$B$ and corresponding to those  $D^1$-conjugacy
classes lying within the same
$D^\times$-conjugacy class are then grouped together in
\eqref{eq:91}.  As we eventually discover in
Proposition~\ref{prop:key-equality}, the combined pieces of  orbital integrals
are related to the spinor trace formula developed by the authors in
\cite[Proposition~4.3]{Xue-Yu-Selec-2022} (see Proposition~\ref{prop:spinor-trace-formula}).  However, such a connection is in no way
obvious.  Major efforts are spent in
Section~\ref{sec:class-number-formula} to forge the links, and
the main
technicality of the present paper arises this way.

One of our motivations for developing the class number formulas for
$h^1(\calO)$ and $h_\scc(\calO)$ is to
count certain  abelian surfaces over finite
fields. From the Honda-Tate theorem \cite[Theorem~1]{tate:ht}, given a prime number $p\in \bbN$, there is a unique isogeny
class of simple abelian surfaces over the prime finite field $\bbF_p$
corresponding to the real Weil numbers $\pi=\pm\sqrt{p}$. Let
$F=\Q(\sqrt{p})$, and $D_\pi:=\End_{\F_p}(X)\otimes_\Z\Q$ be the endomorphism
algebra of an arbitrary member   $X/\F_p$ in this isogeny class. Then $X$ is necessarily superspecial, and $D_\pi$ is equal to the unique totally
definite quaternion $F$-algebra $\grD_{\infty_1, \infty_2}$
that is unramified at all finite places of $F$.
In another paper \cite{xue-yu:ppas} by the present authors, we work out explicitly the class numbers $h^1(\bbO)$ and $h_\scc(\bbO)$ for
every maximal $O_F$-order $\bbO$ in $D_{\infty_1, \infty_2}$.
It is shown there 
 that  $h^1(\bbO)$
(resp.~$h_\scc(\bbO)$) counts the number of isomorphism  classes of polarized
(resp.~unpolarized) abelian surfaces within certain genus in this
isogeny class. Thus our class number formulas pave the way for explicit formulas for the number $\abs{\mathrm{PPAV}(\sqrt{p})}$ of $\F_p$-isomorphism classes of principally polarized abelian surfaces in this isogeny class. For example, for $p>5$ we show that $\abs{\mathrm{PPAV}(\sqrt{p})}$ is equal to 
\begin{equation*}\label{eq:p=1mod4}
  \left ( 9-2\left(\frac{2}{p}\right )\right )
   \frac{\zeta_F(-1)}{2}+ \frac{3h(-p)}{8}+\left (
   3+\left(\frac{2}{p}\right )\right ) \frac{h(-3p)}{6} \quad \text{if }p\equiv 1 \pmod 4,
\end{equation*}
and it is equal to
\begin{equation*}   \frac{\zeta_F(-1)}{2}+\left(11-3\Lsymb{2}{p}\right)\frac{h(-p)}{8}+\frac{h(-3p)}{6} \qquad \text{if } p\equiv 3\pmod{4}.
\end{equation*}
Here $\left( \frac{\cdot}{p}\right ) $ denotes the Legendre
symbol, and $h(d)$ denotes the class number of the quadratic field $\Q(\sqrt{d})$.

This paper is organized as follows. In Section~\ref{sec:oss}, we
give a brief review of 
the optimal spinor selectivity theory. The meanings of
the symbols $\Delta(B, \calO)$, $s(B, \calO)$ and their methods of
calculation will be explained in more details. In Section~\ref{sec:mass-form}, we (re)produce the mass
formulas and sketch a proof for the formula of 
$h_\scc(\calO)$. 
  Section~\ref{sec:class-number-formula}
constitutes the core part of this paper, where we derive the 
formula for $h^1(\calO)$. 

\section{Optimal spinor selectivity theory}
\label{sec:oss}



In this section, we review briefly the optimal spinor selectivity
theory. Crucial to our class number formulas is the spinor trace
formula~\eqref{eq:251}, which involves both the symbols $\Delta(B, \calO)$
and $s(B, \calO)$.  Our main reference to this section is the paper
\cite{Xue-Yu-Selec-2022} by the present authors. See also Chapter~31
of Voight's book \cite{voight-quat-book}. There is no need to assume
that the quaternion algebra $\grD$ is totally definite yet, so $F$ is
allowed to be an arbitrary number field in this section. Throughout
this section,  $B$ denotes an $O_F$-order in a quadratic field extension $K/F$
that embeds into $D$.

Given a locally principal right $\calO$-ideal $I$ in
$D$, we write $[I]$ for its ideal class. 
The set of
locally principal right $\calO$-ideal classes is denoted by
$\Cl(\calO)$. 
  A key to
the proof of the Eichler class number formula for $h(\calO):=\abs{\Cl(\calO)}$ is the following well-known \emph{trace formula} 
\cite[Theorem~III.5.11]{vigneras} (cf.~\cite[Theorem~30.4.7]{voight-quat-book}, \cite[Lemma~3.2.1]{xue-yang-yu:ECNF} and
\cite[Lemma~3.2]{wei-yu:classno}) for optimal embeddings:
  \begin{equation}
    \label{eq:51}
\sum_{[I]\in \Cl(\calO)} m(B,\calO_l(I),
    \calO_l(I)^\times)=h(B)\prod_\grp m_\grp(B),  
  \end{equation}
where $\calO_l(I)$ is the left order of $I$ defined in
(\ref{eq:16}), and the product runs over all finite primes $\grp$ of $F$. The spinor trace formula is a refinement of the above
trace formula by grouping the right $\calO$-ideal classes into spinor
classes (see Definition~\ref{defn:spinor-genus-class}).
Unfortunately, for this refinement to hold without prior restrictions
on $B$, we need to assume that $\calO$ is \emph{residually
  unramified}. To explain this notion, we recall the definition of 
\emph{Eichler invariant} from 
\cite[Definition~1.8]{Brzezinski-1983} and \cite[Definition~24.3.2]{voight-quat-book}.


\begin{defn}\label{defn:eichler-invariant}
(1)  Let $\grp$ be a prime ideal of $O_F$,  $\grk_\grp:= O_F/\grp$
be the finite residue field,  and $\grk_\grp'/\grk_\grp$ be the unique
quadratic field extension. Let $O_{F_\grp}$ and  $\calO_\grp$ be the $\grp$-adic
completions  of $O_F$ and  $\calO$ respectively. 
When $\calO_\grp$ is not isomorphic to the matrix ring $M_2(O_{F_\grp})$, the quotient of $\calO_\grp$ by its Jacobson radical
$\grJ(\calO_\grp)$ falls into the following three cases: 
\[\calO_\grp/\grJ(\calO_\grp)\simeq \grk_\grp\times \grk_\grp, \qquad \grk_\grp,
\quad\text{or}\quad \grk_\grp', \]
and the \emph{Eichler invariant} $e_\grp(\calO)$ of $\calO$ at $\grp$ is defined to be
$1, 0, -1$ accordingly.  As a convention, if $\calO_\grp\simeq
M_2(O_{F_\grp})$, then its Eichler invariant at $\grp$ is defined to be
$2$. 

(2) We say that $\calO$ is \emph{residually unramified at $\grp$} if
$e_\grp(\calO)\neq 0$. If $e_\grp(\calO)\neq 0$ for every finite prime
$\grp$ of $F$, then we simply say that $\calO$ is \emph{residually unramified}. 
\end{defn}

 For
example, if $\grp$ is ramified in $D$  and $\calO_\grp$ is maximal, then
$e_\grp(\calO)=-1$. It is shown in
\cite[Proposition~2.1]{Brzezinski-1983} that 
$e_\grp(\calO)=1$ if and only if
$\calO_\grp$ is a non-maximal Eichler order (particularly,
$\grp$ is split in $D$).  Let $\grn$ be a nonzero ideal of $O_F$ such that no
prime divisor of $\grn$ is ramified in $D$. 
If $\calO$ is an Eichler
order of level $\grn$, then 
\begin{equation}\label{eq:265}
e_\grp(\calO)=
\begin{cases}
  1 &\text{if } \grp|\grn;\\
  -1&\text{if } \grp \text{ is ramified in $D$};\\
  2&\text{otherwise}.  
\end{cases}
\end{equation}
This shows that all Eichler orders are residually unramified.
\begin{rem}\label{rem:res-unr}
From \cite[Corollary~2.4 and Proposition~3.1]{Brzezinski-1983}, a
residually unramified order $\calO$ is automatically \emph{Bass} (in
particular, \emph{Gorenstein}), so any $O_F$-lattice $L$ of full rank
in $D$ with $\calO_l(L)=\calO$ is automatically locally principal as a
left $\calO$-ideal by \cite[Example~2.6]{Brzezinski-loc-Princ}. Moreover, it has been shown in  the proof of
\cite[Lemma~2.17]{Xue-Yu-Selec-2022} that if $\calO$ is residually unramified, then 
\begin{equation}\label{eq:x2}
    \Nr(\wcO^\times)=\whO_F^\times.
\end{equation}
\end{rem}

We return to  the assumption that $\calO$ is arbitrary.  By
definition, a genus of orders in $D$ is an equivalence class
of orders that are locally isomorphic at every finite prime of $F$. 
The notion of \emph{spinor genus of $O_F$-orders}  has already appeared in
Definition~\ref{defn:spinor-genus-class}.  Let $\scrG$
(resp.~$[\calO]_\sg$) be the genus (resp.~spinor genus) of
$\calO$. More explicitly,
\begin{align*}
  \scrG&:=\{\calO'\mid \exists x\in \whD^\times \text{ such that }
         \wcO'=x \wcO x^{-1}\},\\
  [\calO]_\sg&:=\{\calO'\mid \exists x\in D^\times\whD^1 \text{ such that }
         \wcO'=x \wcO x^{-1}\}\subseteq \scrG.
\end{align*}
The set of spinor genera within $\scrG$ is denoted by
$\SG(\scrG)$, that is,
$\SG(\scrG):=\{[\calO']_\sg\mid \calO'\in \scrG\}$. Often we write
$\SG(\calO)$ for $\SG(\scrG)$ and regard it as a pointed set with the
base point $[\calO]_\sg$.  For simplicity let us put
$F_D^\times=\Nr(D^\times)$, where $\Nr: D^\times\to F^\times$ is the
reduced norm map.  From the Hasse-Schilling-Maass theorem
\cite[Theorem~33.15]{reiner:mo} \cite[Theorem~III.4.1]{vigneras},
$F_D^\times$ coincides with the subgroup of $F^\times$ consisting of the
elements that are positive at each infinite place of $F$ ramified in
$D$.  Let $\calN(\wcO)$ be the normalizer of $\wcO$ in
$\whD^\times$. The pointed set $\SG(\calO)$ can be described
adelically as follows
\begin{equation}
  \label{eq:118}
\SG(\calO)\simeq  (\grD^\times\whD^1)\bsh \whD^\times/\calN(\wcO)\xrightarrow[\simeq]{\Nr}
  F_\grD^\times\bsh \whF^\times/\Nr(\calN(\wcO)),  
\end{equation}
where the two double coset spaces are canonically bijective via the
reduced norm map. Clearly, $\Nr(\calN(\wcO))$ depends only on the
genus $\scrG$  and not on the particular choice of $\calO\in
\scrG$.

\begin{defn}[{\cite[\S2]{Arenas-Carmona-Spinor-CField-2003},
    \cite[\S3]{Linowitz-Selectivity-JNT2012}}] \label{defn:spinor-field}
  The \emph{spinor genus field}\footnote{This field is often
    called the \emph{spinor class field} in the literature
    \cite{Arenas-Carmona-Spinor-CField-2003, Arenas-Carmona-2013,
      Arenas-Carmona-cyclic-orders-2012}, but that can be easily mixed
    up 
    with the notion of \emph{spinor class} here. On the other hand, if $F$ is a quadratic field, and $\calO$ is an Eichler order, then $\Sigma_\scrG$ is a subfield of the classical \emph{(strict) genus field} in \cite[Definition~15.29]{Cohn-invitation-Class-Field} or \cite[\S6]{Cox-Primes}, so the terminology is consistent in that sense. } of $\scrG$ is the abelian field extension
  $\Sigma/F$ corresponding to the open subgroup
  $F_\grD^\times\Nr(\calN(\wcO))\subseteq \whF^\times$ via the class
  field theory \cite[Theorem~X.5]{Lang-ANT}. 
\end{defn}

Since $\Nr(\calN(\wcO))$ is an open subgroup of
$\whF^\times$ containing $(\whF^\times)^2$, the Galois group $\Gal(\Sigma/F)$ is a
finite elementary $2$-group
\cite[Proposition~3.5]{Linowitz-Selectivity-JNT2012}.
We have a canonical identification of  pointed sets
\begin{equation}
  \label{eq:8}
\SG(\calO)\simeq  F_D^\times\bsh \whF^\times/
  \Nr(\calN(\wcO))\simeq \Gal(\Sigma/F), 
\end{equation}
where the base point $[\calO]_\sg$ is identified with the identity
element of $\Gal(\Sigma/F)$. Given another order $\calO'\in \scrG$, we define $\rho(\calO, \calO')$
to be the element of $\Gal(\Sigma/F)$ identified with
$[\calO']_\sg\in \SG(\calO)$  via (\ref{eq:8}).  More explicitly, if
$\wcO'=x\wcO x^{-1}$ for some $x\in \whD^\times$, then $\rho(\calO,
\calO')=(\Nr(x), \Sigma/F)$, where $a\mapsto (a, \Sigma/F)$ is the
Artin map on  the finite idele group $\whF^\times$. Write the group law of
$\Gal(\Sigma/F)$ additively. Then $\rho(\calO, \calO')$ enjoys the following
properties:
\begin{enumerate}[label=(\alph*)]
\item $\rho(\calO, \calO')=0$ if and only if $\calO\sim \calO'$;
\item $\rho(\calO, \calO')=\rho(\calO', \calO)$;
\item   $\rho(\calO, \calO'')=\rho(\calO, \calO')+\rho(\calO', \calO'')$.
\end{enumerate}
When $\scrG$ is a genus of residually unramified orders,
$\rho(\calO, \calO')$ can be computed as follows. There exists an
$O_F$-lattice $I\subset D$ \emph{linking} $\calO$ and $\calO'$ in the
sense that $I$ is locally principal right $\calO$-ideal with
$\calO_l(I)=\calO'$.  Then
$\rho(\calO, \calO')\in \Gal(\Sigma/F)$ is given by the Artin
symbol $(\Nr(I), \Sigma/F)$.  Here the Artin symbol is well-defined
since  $\Nr(\wcO^\times)=\whO_F^\times$ by \eqref{eq:x2}, so  $\Sigma/F$ is unramified at all the finite
places of $F$.

Now let $\scrG$ be an arbitrary genus of $O_F$-orders in $D$.  Recall that
$B$ denotes an $O_F$-order in a quadratic field extension $K/F$ that embeds
into $D$.  
Let $\Delta(B, \calO)$ be the symbol defined in \eqref{eq:79}. The
reason that it is called the \emph{optimal spinor selectivity symbol}
is as follows.

\begin{defn}
  We say $B$ is \emph{optimally spinor selective}  for  
  the genus $\scrG$ if $\Delta(B, \calO)=1$ for some but not all $[\calO]_\sg\in \SG(\scrG)$. If $B$ is selective for $\scrG$, then a
  spinor genus $[\calO]_\sg$ with $\Delta(B, \calO)=1$
  is said to be \emph{selected} by $B$.
\end{defn}

As mentioned in the introduction, if $\Delta(B, \calO)=1$ for some
$\calO\in \scrG$, then $m_\grp(B)\neq 0$ for  every
finite prime $\grp$ of $F$. Note that the latter condition depends
only on $\scrG$ and not on the particular choice of $\calO\in
\scrG$. Conversely, whether and when the condition $m_\grp(B)\neq 0$ for  every
$\grp$ implies that $\Delta(B, \calO)=1$ for every $[\calO]\in
\SG(\scrG)$ are the central questions of the optimal spinor selectivity
theory. The importance of the symbol $s(B, \calO)$ defined in \eqref{eq:17} is
clear by the following theorem, which is  obtained by combining
\cite[Theorem~2.15 and Corollary~2.16]{Xue-Yu-Selec-2022}.

\begin{thm}\label{thm:selectivity}
  Let $\scrG$ be a genus of residually unramified $O_F$-orders in $D$, and $B$ be an
  $O_F$-order in a quadratic field extension $K/F$ that embeds into $D$. 
 Suppose that $m_\grp(B)\neq 0$ for every finite prime $\grp$ of $F$. 
Then $B$ is optimally spinor selective for $\scrG$ if and only if
$K\subseteq \Sigma$. 
    If $B$ is optimally spinor selective, then
    \begin{enumerate}
    \item  for any two orders   $\calO, \calO'\in \scrG$,  
   \begin{equation}
\label{eq:164}
    \Delta(B, \calO)=\rho(\calO, \calO')|_K
+\Delta(B, \calO'), 
  \end{equation}
  where $\rho(\calO, \calO')|_K$ is the restriction of $\rho(\calO,
  \calO')\in \Gal(\Sigma/F)$ to $K$,  and the
  summation on the right is taken inside $\zmod{2}$ with the canonical
  identification $\Gal(K/F)\simeq \zmod{2}$;

  \item    exactly half of the spinor genera in $\SG(\scrG)$ are 
    selected by $B$. 
    \end{enumerate}
\end{thm}
The above
theorem was first obtained by Maclachlan
\cite{Maclachlan-selectivity-JNT2008} for Eichler orders of
square-free levels, and it was extended to Eichler orders of
arbitrary levels independently 
by Arenas et al
\cite{M.Arenas-et.al-opt-embed-trees-JNT2018} and by Voight
\cite[Chapter~31]{voight-quat-book}. 
See \cite[Theorem~2.15]{Xue-Yu-Selec-2022} and
\cite[Theorem~2.6]{peng-xue:select} for more general theorems where 
the ``residually unramified'' assumption is dropped. 

As soon as the value of $\Delta(B, \calO)$ is known  for
one order $\calO$, we can use formula \eqref{eq:164} to compute all
other $\Delta(B, \calO')$.  This formula can be generalized a little
further when $\scrG$ is a genus of Eichler orders. 
Let $\grf(B)$ be the conductor of
$B$, that is, $\grf(B)$ is the unique $O_F$-ideal $\grf(B)\subseteq
O_F$ such that $B=O_F+\grf(B)O_K$. If $B'$ is another order in $K$, we
put $\grf(B'/B):=\grf(B')^{-1}\grf(B)$ and call it the \emph{relative
  conductor} of $B$ with respect to $B'$.

\begin{prop}
Let $\scrG$ be a genus of Eichler orders in $D$, and $K$
be a quadratic field extension of $F$ that embeds into $D$. Let $B$ and
$B'$ be $O_F$-orders in $K$  satisfying $m_\grp(B)\neq 0$ and 
$m_\grp(B')\neq 0$ for all finite primes $\grp$ of $F$. Suppose that
$K\subseteq \Sigma$ so that both $B$ and $B'$ are optimally spinor
selective for $\scrG$. Then
\begin{equation}
    \label{eq:120}
    \Delta(B, \calO)=\big(\grf(B'/B),
    K/F\big)+\rho(\calO, \calO')|_K+\Delta(B', \calO'),   
  \end{equation}
where $\big(\grf(B'/B),
    K/F\big)$ is the Artin symbol. In particular, if
    $B'=\varphi^{-1}(\calO)$ for some $F$-embedding $\varphi: K\to D$,
    then
    \begin{equation}
      \label{eq:18}
      \Delta(B, \calO)=\big(\grf(B'/B),
    K/F\big)+1. 
    \end{equation}
\end{prop}
Here the assumption $K\subseteq \Sigma$
    implies that $K$ is unramified at all the finite places of
    $F$ by Lemma~\ref{lem:eich-inv-nonzero} below, so 
the Artin symbol $\big(\grf(B'/B),
    K/F\big)$ is well-defined.  The proposition above is first
    obtained by  Maclachlan
\cite{Maclachlan-selectivity-JNT2008} for Eichler orders of
square-free levels, and it is extended to  Eichler orders of
arbitrary levels by the current authors in
\cite[\S3]{Xue-Yu-Selec-2022}.

We give a local criterion for the inclusion $K\subseteq \Sigma$, which
enables us to compute the symbol $s(B, \calO)$ easily when $\calO$ is
residually unramfied (cf.~\cite[Proposition~2.9]{peng-xue:select}). Let
$\grd(\calO)$ be the \emph{reduced discriminant} \cite[\S I.4,
p.~24]{vigneras} of $\calO$, which is an integral ideal of $O_F$.  For each prime
$\grp$, we write $\nu_\grp: F^\times\twoheadrightarrow \Z$ for
the normalized discrete valuation of $F$ attached to $\grp$.

\begin{lem}\label{lem:eich-inv-nonzero}
Let $\scrG$ be a genus of residually unramified orders in $D$, and $K$
be a quadratic field extension of $F$ that embeds into $D$. 
Then 
  $K\subseteq \Sigma$  if and only if both of the following conditions hold:
\begin{enumerate}
\item[(a)] the extension $K/F$ and the quaternion $F$-algebra $D$ are unramified
  at every finite prime $\grp$ of $F$  and ramify at exactly the same  (possibly empty) set
  of infinite places;
\item[(b)] if $\grp$ is a finite prime of $F$ with $\nu_\grp(\grd(\calO))\equiv
  1\pmod{2}$, then $\grp$ splits in $K$. 
\end{enumerate}
\end{lem}

Once again the above lemma is a generalization of 
\cite[Proposition~31.2.1]{voight-quat-book} (see also
\cite[Theorem~1.1(3)]{M.Arenas-et.al-opt-embed-trees-JNT2018} and 
  \cite[Proposition~5.11]{Linowitz-Selectivity-JNT2012}). It is
  obtained in the current form by the authors in
  \cite[Lemma~2.17]{Xue-Yu-Selec-2022}.

  Finally, we move on to the spinor trace formula.  For the moment, we
  assume that $\calO$ is an arbitrary $O_F$-order in $D$.  The notion of
  \emph{spinor class} of locally principal right $\calO$-ideals has
  already appeared in   Definition~\ref{defn:spinor-genus-class}. Note
  that if $I$ and $I'$ belong to the same spinor
  class, then their left orders $\calO_l(I)$ and $\calO_l(I')$ belong to 
  the same spinor genus. The set of all locally
  principal right $\calO$-ideals $I'$ in the same spinor class as $I$
  will be  denoted by
  $[I]_\scc$. Let $\Cl(\calO, [I]_\scc)$ be the set
  of ideal classes in  $[I]_\scc$, that is
  \begin{equation}
    \label{eq:125}
    \Cl(\calO,
  [I]_\scc):=\{[I']\in \Cl(\calO)\mid [I']\subseteq [I]_\scc\}. 
  \end{equation}
With this notation, the set $\Cl_\scc(\calO)$ considered in the introduction is just
$\Cl(\calO, [\calO]_\scc)$. 

   Let $\SCl(\calO)$ be the set of spinor classes of locally principal
   right $\calO$-ideals,  regarded as a pointed set with base
   point $[\calO]_\scc$. It admits an adelic description as follows
\begin{equation}
  \label{eq:14}
  \SCl(\calO)\simeq (\grD^\times\whD^1)\bsh \whD^\times/\wcO^\times\xrightarrow[\simeq]{\Nr}
  F_\grD^\times\bsh \whF^\times/\Nr(\wcO^\times),   
\end{equation}
where the two double coset spaces are canonically bijective via the
reduced norm map. This equips $\SCl(\calO)$ with a group structure
(whose identity element is $[\calO]_\scc$), so  we call it 
the \emph{spinor class group}.

By definition,  the ideal class set
$\Cl(\calO)$ is partitioned into a finite disjoint union
\begin{equation}
  \label{eq:15}
  \Cl(\calO)=\coprod_{[I]_\scc\in \SCl(\calO)} \Cl(\calO, [I]_\scc). 
\end{equation}
Let $I^{-1}$ be the inverse ideal of $I$ as defined in \cite[\S I.4,
p.~21]{vigneras}, that is, $I^{-1}:=\{\alpha\in D\mid I \alpha I
\subseteq I\}$. Then $I^{-1}$ is a locally principal left
$\calO$-ideal whose right order coincides with $\calO_l(I)=I I^{-1}$. Right multiplication by
$I^{-1}$ induces a bijection
\[\{\text{locally principal right $\calO$-ideals}\}\to \{\text{locally principal right $\calO_l(I)$-ideals}\},\]
which preserves spinor classes, ideal classes, and left orders.   In
particular, it induces  a bijection 
\begin{equation}\label{eq:q1}
\Cl(\calO, [I]_\scc)\simeq
\Cl_\scc(\calO_l(I)).
\end{equation}  From (\ref{eq:15}) we get 
\begin{equation}
  \label{eq:254}
h(\calO)=\sum_{[I]_\scc\in \SCl(\calO)} h_\scc(\calO_l(I)), 
\end{equation}
so the class number formula for $h_\scc(\calO)$ is indeed a refinement
of the Eichler class number formula for $h(\calO)$. 
As mentioned in the introduction, 
$h_\scc(\calO)$ depends only on  the spinor genus of $\calO$, so
$h_\scc(\calO_l(I))$ does not depend on the choice of the
representative $I$ of $[I]_\scc$. 

Following \cite[\S III.5, p.~88]{vigneras}, we define the \emph{restricted class number of $F$ with respect to
  $\grD$} as 
$h_\grD(F):=\abs{\whF^\times/(F_\grD^\times\whO_F^\times)}$.  If
$\calO$ is residually unramified, then $\Nr(\wcO^\times)=\whO_F^\times$ by \eqref{eq:x2}. Thus 
$\abs{\SCl(\calO)}=h_D(F)$ for residually
unramified $\calO$ by \eqref{eq:14}. Now the following 
spinor trace formula is a special case of the one given in
\cite[Proposition~4.3]{Xue-Yu-Selec-2022}. 

\begin{prop}[Spinor trace formula]\label{prop:spinor-trace-formula}
  Let $\calO$ be a residually unramified $O_F$-order in $D$, and $B$ be an
  $O_F$-order in a quadratic field extension $K/F$ that embeds into $D$.
Then we have 
\begin{equation}
\label{eq:251}
\sum_{[I]\in \Cl_\scc(\calO)}
m(B,\calO_l(I),
    \calO_l(I)^\times)=\frac{2^{s(B, \calO)}\Delta(B,
      \calO)h(B)}{h_D(F)}\prod_{\grp} m_\grp(B). 
  \end{equation}
  If further $D$ is assumed to be ramified at some finite prime of
  $F$, then
  \begin{equation}\label{eq:11}
    \sum_{[I]\in \Cl_\scc(\calO)}
m(B,\calO_l(I),
    \calO_l(I)^\times)=\frac{h(B)}{h_D(F)}\prod_{\grp} m_\grp(B). 
  \end{equation}
\end{prop}

Note that  \eqref{eq:11} is just a specialized form of
\eqref{eq:251}. Indeed, if $D$ is ramified at some finite place of
$F$, then $s(B, \calO)=0$ (that is, $\Frac(B)\not\subseteq \Sigma$) by
Lemma~\ref{lem:eich-inv-nonzero}. Now it follows from
Theorem~\ref{thm:selectivity} that $\Delta(B, \calO)\prod_\grp m_\grp(B)=\prod_\grp m_\grp(B)$. 
More explicitly, if $m_\grp(B)\neq 0$ for every finite prime $\grp$ of
$F$, then $\Delta(B, \calO)=1$, otherwise
$\Delta(B, \calO)=\prod_\grp m_\grp(B)=0$.

The spinor trace formula is a
refinement of the trace formula \eqref{eq:51}. When $D$ satisfies the
Eichler condition (that is, $D$ is indefinite), Brzezinski
\cite[Proposition~1.1]{Brzezinski-Spinor-Class-gp-1983} shows that
each spinor genus of $O_F$-orders contains exactly one
$D^\times$-conjugacy class, and each spinor class of locally principal right
$\calO$-ideals contains exactly one ideal class.  Thus in this case
$\Cl_\scc(\calO)$ is a singleton with the unique member $[\calO]$, and the summation on the left hand of
\eqref{eq:251} contains only the term $m(B, \calO, \calO^\times)$. 

The spinor trace formula was first studied by  Vign\'eras in 
the indefinite context for Eichler orders in
\cite[Corollaire~III.5.17]{vigneras}. However, it was pointed out by
Chinburg and Friedman in \cite[Remark~3.4]{Chinburg-Friedman-1999}
that  her formula needs to be
adjusted to account for the overlooked exceptional cases where
(optimal spinor) selectivity does
occur. Indeed, it was them who coined the term ``selectivity''. 
The corrected formula was worked out by Voight in \cite[Theorem~31.1.7(c) and
Corollary~31.1.10]{voight-quat-book} under
the same assumption as Vign\'eras's. The formula is further
generalized to the totally definite case by the 
current authors  
 for the purpose of the present paper. 


 \begin{rem}\label{rem:why-res-unramfied}
The reason that we assume $\calO$ to be residually unramified  is that only in this case, the spinor
trace formula (\ref{eq:251}) is currently known to hold without
further restrictions on $B$.  This is also precisely the reason why we
make the same assumption on $\calO$ in
Theorem~\ref{thm:class-number-formula}. 
As soon as this assumption is dropped, the criterion
for optimal spinor selectivity can become very complicated. Compare
Theorem~\ref{thm:selectivity} with
\cite[Theorem~2.6]{peng-xue:select}.  In the more general case, there are
examples of pair of orders  $\calO$ and $B$ in
\cite[\S5]{peng-xue:select} where  $s(B, \calO)=1$ yet  $B$ is
non-selective for the genus of $\calO$. For such orders it is unknown
whether (\ref{eq:251}) remains true or not.  See
\cite[Remark~4.4]{Xue-Yu-Selec-2022} for more details. 
 \end{rem}

Lastly, suppose that $F$ is a totally real field, and $D$ is a totally
definite quaternion algebra.  We provide a formula for $\SCl(\calO)$
in this case for an arbitrary $\calO$. 
Let $F_+^\times$ be the group of totally positive
elements of $F^\times$, and
$O_{F, +}^\times:=F_+^\times\cap O_F^\times$. The subgroup of $O_{F,
  +}^\times$ consisting of
all perfect squares in $O_F^\times$ is denoted by $O_F^{\times
  2}$.  Since $D$ is totally definite, $F_D^\times=F_+^\times$, and
$h_D(F)=h^+(F)$, the narrow class number of $F$. From
\cite[Lemma~11.6]{Conner-Hurrelbrink}, we have
\begin{equation}
  \label{eq:24}
  h^+(F)=h(F)[O_{F,+}^\times: O_F^{\times
  2}].
\end{equation}




\begin{lem}\label{lem:spinor-class-num-formula}
Let $F$ and $D$ be as above, and $\calO$ be an arbitrary order in
$D$. Then 
\begin{equation}
  \label{eq:138}
\abs{\SCl(\calO)}=h(F)[\whO_F^\times:
\Nr(\wcO^\times)][(O_{F,+}^\times\cap \Nr(\wcO^\times)): O_F^{\times
  2}].
\end{equation}
In particular, if $\calO$ is residually unramified, then
$\abs{\SCl(\calO)}=h^+(F)$. 
\end{lem}


\begin{proof}
A straightforward calculation shows that 
\begin{equation}
  \label{eq:144}
\frac{\abs{\SCl(\calO)}}{h^+(F)}=[F_+^\times \whO_F^\times:F_+^\times\Nr(\wcO^\times)]=\frac{[\whO_F^\times:
\Nr(\wcO^\times)]}{[O_{F,+}^\times:(O_{F,+}^\times\cap
\Nr(\wcO^\times))]}. 
\end{equation}
Plugging (\ref{eq:24}) into (\ref{eq:144}), we obtain (\ref{eq:138}) by observing
that $(O_{F,+}^\times\cap\Nr(\wcO^\times))\supseteq O_F^{\times2}$.
The last part is a special case of the identity
$\abs{\SCl(\calO)}=h_D(F)$ for residually unramified orders as
observed right before Proposition~\ref{prop:spinor-trace-formula}. 
\end{proof}

\section{The Mass formulas and the  formula for \texorpdfstring{$h_\scc(\calO)$}{hsc(O)}}
\label{sec:mass-form}
The goal of this section is twofold.  We first (re)produce the mass
formulas for $\Mass^1(\calO)$ and $\Mass_\scc(\calO)$ following the
expositions in \cite[\S5]{xue-yang-yu:ECNF} and
\cite[\S2]{yu:mass_var}, and then move on to the class number formula
for $h_\scc(\calO):=\abs{\Cl_\scc(\calO)}$. The derivation of the formula for $h_\scc(\calO)$
follows exactly the same line of argument as that for the Eichler class
number formula \cite[Corollaire~V.2.5, p.~144]{vigneras}, so we merely
provide a brief sketch.  Throughout this section, $F$ denotes a
totally real field, and $D$ a totally definite quaternion
$F$-algebra. For each nonzero ideal $\gra\subseteq O_F$, we put
$\Nm(\gra)=\abs{O_F/\gra}$.  


Let $\calO$ be an arbitrary $O_F$-order in $D$ with reduced discriminant  
$\grd(\calO)$, and $\Cl(\calO)$ be
the right $\calO$-ideal class set.  
Since $D$ is totally definite,  $\calO^\times/O_F^\times$ is a finite
group by \cite[Theorem~V.1.2]{vigneras}.  By definition, the
\emph{mass} of $\calO$ is the following weighted sum 
\begin{equation}
  \label{eq:230}
  \Mass(\calO):=\sum_{[I]\in \Cl(\calO)}
  \frac{1}{[\calO_l(I)^\times:O_F^\times]}, 
\end{equation}
where $\calO_l(I)$ is the left order of $I$ in (\ref{eq:16}).
It is first shown by K\"{o}rner \cite{korner:1987} (cf.~\cite[Corollary~4.3]{yu:mass_var}) that
\begin{equation}
\label{eq:234}
  \Mass(\calO)=\frac{h(F)\abs{\zeta_F(-1)}\Nm(\grd(\calO))}{2^{n-1}}\prod_{\grp\ddiv
    \grd(\calO)}\frac{1-\Nm(\grp)^{-2}}{1-e_\grp(\calO)\Nm(\grp)^{-1}}.  
\end{equation}
where $n=[F:\Q]$, and $e_\grp(\calO)$ is the Eichler invariant in Definition~\ref{defn:eichler-invariant}. Here $\zeta_F(s)$ is the Dedekind $\zeta$-function
of $F$, whose special values at all negative odd integers are rational
\cite[Theorem, p.~59]{Zagier-1976-zeta}.  It is
easy to see from the functional equation \cite[\S XIII.3,
Theorem~2]{Lang-ANT} that $\sgn(\zeta_F(-1))=(-1)^n$.

As usual, the spinor class mass  is a refinement of
$\Mass(\calO)$. For each spinor  class
$[J]_\scc\in \SCl(\calO)$, we define 
\begin{equation}
  \label{eq:231}
  \Mass(\calO, [J]_\scc):=\sum_{[I]\in \Cl(\calO, [J]_\scc)}
  \frac{1}{[\calO_l(I)^\times:O_F^\times]}, 
\end{equation}
where $\Cl(\calO, [J]_\scc)$ denotes the set of ideal classes in
$[J]_\scc$ as in \eqref{eq:125}. For simplicity, put
$\Mass_\scc(\calO):=  \Mass(\calO, [\calO]_\scc)$.
\begin{lem}\label{lem:mass-sc}
The
mass is equi-distributed among the spinor classes, so
\begin{equation}
  \label{eq:233}
  \Mass(\calO, [J]_\scc)=\frac{\Mass(\calO)}{\abs{\SCl(\calO)}},
  \qquad \forall \ [J]_\scc\in \SCl(\calO).  
\end{equation}
\end{lem}
\begin{proof}
 We recall the volume interpretation of
$\Mass(\calO)$ from \cite[\S5]{xue-yang-yu:ECNF}.   Consider the
following groups 
\begin{equation}
  \label{eq:198}
H:=\whD^\times/\whO_F^\times, \qquad U:=\wcO^\times/\whO_F^\times,
\qquad \Omega:=\grD^\times\whO_F^\times/\whO_F^\times\simeq \grD^\times/O_F^\times.
\end{equation}
Clearly, $H$ is a  locally compact topological
group, and $U$ is an open  compact subgroup of
$H$.   It is well known that  the group of  finite adelic points of a connected
reductive linear algebraic group over $F$ is unimodular. In particular, $\whD^\times$
is  unimodular, and hence $H$ itself is unimodular by
\cite[Proposition~22, \S II.5]{Nachbin-Haar-Integral}.
We normalize
the Haar measure on $H$ so that $\Vol(U)=1$.
Since $\calO^\times/O_F^\times$ is finite, the group $\Omega$ is 
\emph{discrete} in $H$, and it is also cocompact by \cite[Theoreme
Fondamental, pp.~61--62]{vigneras} or 
\cite[Theorem~5.2]{Platonov-Rapinchuk}.  Equip $\Omega$ with the
counting measure. From  \cite[Lemma~5.1.1]{xue-yang-yu:ECNF}, we have
$\Mass(\calO)=\Vol(\Omega\bsh H)$, where the homogeneous space
$\Omega\bsh H$ is equipped with the unique 
induced right $H$-invariant measure \cite[Corollary~4, \S
III.4]{Nachbin-Haar-Integral}.  Given a locally principal right
$\calO$-ideal $J$, we write $\whJ=x\wcO$ for some $x\in \whD^\times$.
  From
\eqref{eq:14}, the spinor class $[J]_\scc \in
\SCl(\calO)$ corresponds to the double coset
$D^\times\whD^1x\wcO^\times$.  
The same proof as that of
\cite[Lemma~5.1.1]{xue-yang-yu:ECNF} shows that
\begin{equation*}
  \begin{split}
  \Mass(\calO, [J]_\scc)&= \Vol\big(  \Omega\big\bsh (D^\times\whD^1x\wcO^\times/\whO_F^\times)\big)\xeq{(\dagger)}\Vol\big(  \Omega\big\bsh (D^\times\whD^1x\wcO^\times x^{-1}/\whO_F^\times)\big)  \\
  &\xeq{(\ddagger)}\Vol\big(  \Omega\big\bsh (D^\times\whD^1\wcO^\times/\whO_F^\times)\big)=\Mass(\calO, [\calO]_\scc). 
  \end{split}
\end{equation*}
Here $(\dagger)$ follows from the right
$H$-invariance of the measure on $\Omega\bsh H$, and $(\ddagger)$ follows from the equality
$\whD^1x\wcO^\times x^{-1}=\whD^1\wcO^\times$. The lemma is proved. 
\end{proof}


A formula for $\abs{\SCl(\calO)}$ has already been worked out in
\eqref{eq:138}. Let us put 
\begin{equation}
  \label{eq:259}
 u(\calO):=[(O_{F,+}^\times\cap \Nr(\wcO^\times)): O_F^{\times
  2}].   
\end{equation}
Combining \eqref{eq:233} and \eqref{eq:138}, we get  \begin{equation}
  \label{eq:235}
  \Mass_\scc(\calO)=\frac{\abs{\zeta_F(-1)}\Nm(\grd(\calO))}{2^{n-1}[\whO_F^\times: \Nr(\wcO^\times)]u(\calO)}\prod_{\grp\ddiv
    \grd(\calO)}\frac{1-\Nm(\grp)^{-2}}{1-e_\grp(\calO)\Nm(\grp)^{-1}}.   
\end{equation}
For example, if $\calO$ is residually unramified, then
$\Nr(\wcO^\times)=\whO_F^\times$ by \eqref{eq:x2}, and 
\begin{equation}
  \label{eq:31}
  \Mass_\scc(\calO)=\frac{\abs{\zeta_F(-1)}\Nm(\grd(\calO))}{2^{n-1}[O_{F,+}^\times: O_F^{\times
  2}]}\prod_{\grp\ddiv
    \grd(\calO)}\frac{1-\Nm(\grp)^{-2}}{1-e_\grp(\calO)\Nm(\grp)^{-1}}.   
\end{equation}


Next, we recall the definition and meaning of  $\Mass^1(\calO)$. Let
$\underline{D}^1$ be  reduced norm one group of  $D$, regarded as 
semisimple algebraic group over $F$. Functorially, $\underline{D}^1$
 represents the
functor sending every commutative $F$-algebra $R$ to 
\begin{equation}\label{eq:52}
 \underline{D}^1(R):=\{g\in (\grD\otimes_F R)^\times\mid \Nr(g)=\bar{g}g=1\}. 
\end{equation}
  Thus $\whD^1=\underline{D}^1(\whF)$ is 
a locally compact unimodular group, and $\wcO^1$ is an open
compact subgroup of $\whD^1$. We normalize the Haar measure on
$\whD^1$ so that $\Vol(\wcO^1)=1$. 
From \cite[Lemma~V.1.1]{vigneras}, $\calO^1=\grD^1\cap \wcO^1$ is a
finite group, so $\grD^1=\underline{D}^1(F)$ is discrete in
$\whD^1$,  and we equip it with the counting measure.
From the equivalent conditions in
\cite[Proposition~2.1]{yu:mass_var}, $D^1$ is
cocompact in $\whD^1$, which is also clear by \cite[Theorem~5.2]{Platonov-Rapinchuk}. 
For simplicity, put $\ell:=h^1(\calO)$. 
Let $x_1, \cdots, 
x_\ell\in \whD^1$  be a complete set of representatives for the double
coset space $\grD^1\bsh \whD^1/\wcO^1$. For
each $1\leq i\leq \ell$, we write $\calO_i$ for the $O_F$-order $D\cap
x_i\wcO x_i^{-1}$.  By
definition, $\Mass^1(\calO)$ is the 
following weighted sum 
\begin{equation}
  \label{eq:148}
\Mass^1(\calO):=\sum_{i=1}^\ell \abs{\grD^1\cap
  x_i\wcO^1x_i^{-1}}^{-1}=\sum_{i=1}^\ell \abs{\calO_i^1}^{-1}. 
\end{equation}
Thanks to \cite[Lemma~2.2]{yu:mass_var}, we have 
\begin{equation}
  \label{eq:149}
  \Mass^1(\calO)=\Vol(D^1\bsh \whD^1).  
\end{equation}
It follows  that $\Mass^1(\calO)$ depends only on the genus of
$\calO$.

Combining
\cite[Theorem~3.2, Corollary~3.8 and Corollary~4.3]{yu:mass_var}, we obtain 
\begin{equation}
  \label{eq:192}
  \Mass^1(\calO)=\frac{\abs{\zeta_F(-1)}\Nm(\grd(\calO))}{2^n[\whO_F^\times:\Nr(\wcO^\times)]}\prod_{\grp\ddiv
    \grd(\calO)}\frac{1-\Nm(\grp)^{-2}}{1-e_\grp(\calO)\Nm(\grp)^{-1}}.  
\end{equation}
In particular, if $\calO$ is residually unramified, then
$\Nr(\wcO^\times)=\whO_F^\times$, and 
\begin{equation}
  \label{eq:36}
   \Mass^1(\calO)=\frac{\abs{\zeta_F(-1)}\Nm(\grd(\calO))}{2^n}\prod_{\grp\ddiv
    \grd(\calO)}\frac{1-\Nm(\grp)^{-2}}{1-e_\grp(\calO)\Nm(\grp)^{-1}}.  
\end{equation}

For the last part of this section, we assume that $\calO$ is
residually unramified and write down a formula for
$h_\scc(\calO)$. Keep the notation of Theorems~\ref{thm:ECNF} and
\ref{thm:class-number-formula}. In particular, 
$\scrB$ denotes  the finite set of CM $O_F$-orders $B$ with
$w(B)>1$, where $w(B)=[B^\times:O_F^\times]$.  Let $M(B), \Delta(B,
\calO), s(B, \calO)$ be as defined in \eqref{eq:131}, \eqref{eq:79},
\eqref{eq:17} respectively.


\begin{thm}\label{thm:spinor-class-num}
  Suppose that $\calO$ is residually unramified. Then 
\begin{equation}
  \label{eq:232}
    h_\scc(\calO)=\Mass_\scc(\calO)+\frac{1}{2h^+(F)}\sum_{B\in
      \scrB}2^{s(B, \calO)}\Delta(B, \calO)(w(B)-1)M(B),
  \end{equation}
  where $\Mass_\scc(\calO)$ can be computed by \eqref{eq:31}.

  If further $D$ is ramified at some finite prime of $F$, then
  $h_\scc(\calO)$ depends only on the genus of $\calO$, and it is given
  by
  \begin{equation}
    \label{eq:269}
        h_\scc(\calO)=\Mass_\scc(\calO)+\frac{1}{2h^+(F)}\sum_{B\in
      \scrB}(w(B)-1)M(B).
  \end{equation}
In this case  every
    spinor class of locally principal right $\calO$-ideals  contains the same number of ideal classes, so
    $h(\calO)=h^+(F)h_\scc(\calO)$. 
  \end{thm}
  \begin{proof}
Formula \eqref{eq:269}
is just a specialized version of
\eqref{eq:232}; see a remark below Proposition~\ref{prop:spinor-trace-formula}.   It is clear from \eqref{eq:31} and \eqref{eq:269} that
$h_\scc(\calO)$ depends only on the genus of $\calO$ when $D$ is ramified at some finite place of $F$. Combining with
\eqref{eq:15} and \eqref{eq:254}, we see that the ideal classes are
equi-distributed among the spinor classes, and hence $h(\calO)=h^+(F)h_\scc(\calO)$. 

The proof of \eqref{eq:232} follows exactly the same argument as those for
  \cite[Corollary~V.2.5]{vigneras} and
  \cite[Theorem~1.5]{xue-yang-yu:ECNF}, so we merely provide a sketch
  here.   Let $h=\abs{\Cl(\calO)}$,  and $I_1, \cdots,  I_h$ be a complete set of representatives
  for the full ideal class set $\Cl(\calO)$. Put $r=h_\scc(\calO)$,
  and   arrange the ideals so
  that  $[I_i]\in
  \Cl_\scc(\calO)$ for each $1\leq i\leq r$. For every 
  integral ideal
  $\gra\subseteq O_F$, there is an $h\times h$ integral matrix 
  $\grB(\gra)=(\grB_{ij}(\gra))$ called the  \emph{Brandt
  matrix attached to
  $\gra$} as in \cite[Definition~3.1.3]{xue-yang-yu:ECNF}.  Each entry
$\grB_{ij}(\gra)\in \Z$ is non-negative, and all the diagonal entries $\grB_{ii}(\gra)=0$ 
  unless $\gra$ is
  principal and generated by a totally positive element. We define the
  \emph{spinor Brandt matrix attached to $\gra$} as 
  \begin{equation}
    \label{eq:248}
   \grB_{\scc}(\gra):=(\grB_{ij}(\gra))_{1\leq i, j\leq r}. 
  \end{equation}
In other words, $\grB_{\scc}(\gra)$ is just the upper-left $(r\times
r)$-block of $\grB(\gra)$.  Suppose that $\gra\subseteq O_F$ is a principal
integral ideal generated by a totally positive element $a$. Fix a
complete set $\scrS=\{\varepsilon_1, \cdots, \varepsilon_s\}$ of
representatives for the finite elementary $2$-group $O_{F,
  +}^\times/O_F^{\times 2}$.  For each CM $O_F$-order $B$, we define a
finite set 
\begin{equation}
  \label{eq:250}
  T_{B, \gra}:=\{x\in B\smallsetminus O_F\mid \Nm_{K/F}(x)=\varepsilon a
  \text{ for some } \varepsilon\in \scrS \}, 
\end{equation}
where $K$ is the fractional field of $B$.  Let $\scrB_\gra$ be the set
of CM $O_F$-orders $B$ with $T_{B, \gra}\neq \emptyset$. For example,
if $\gra=O_F$, then $\scrB_\gra$ coincides with the set $\scrB$
considered above. In general, $\scrB_\gra$ is always a finite set,
which can be proved in a similar way as the finiteness of $\scrB^1$ as
explained right after Remark~\ref{rem:4.3} in the
next section.  Mimicking the proof of the Eichler trace formula
\cite[Proposition~2.4]{vigneras}
(cf.~\cite[Theorem~3.3.7]{xue-yang-yu:ECNF}), we obtain a trace
formula for $\Tr(\grB_{\scc}(\gra))$ as follows:
\begin{equation}
  \label{eq:249}
  \Tr(\grB_{\scc}(\gra))=\delta_\gra\Mass_\scc(\calO)+\frac{1}{4h^+(F)}\sum_{B\in
  \scrB_\gra}
  2^{s(B, \calO)}\Delta(B, \calO) M(B)\abs{T_{B, \gra}},
\end{equation}
where $\delta_\gra$ takes the value $1$ or $0$ depending on whether
$\gra$ is the square of a principal ideal or not. Indeed,
(\ref{eq:249}) can be proved in exactly the same way as
\cite[Theorem~3.3.7]{xue-yang-yu:ECNF}. The only adjustment needed is
to apply the spinor mass formula (\ref{eq:231}) and the spinor trace formula
(\ref{eq:251}) when summing up the diagonal entries $\grB_{ii}(\gra)$
in \cite[(3.15)]{xue-yang-yu:ECNF}. This is precisely the place where we have
used the residually unramified assumption on $\calO$.
Lastly, note that $\grB_{ii}(\gra)=1$ for every $1\leq i \leq h$ when
$\gra=O_F$, so $h_\scc(\calO)= \Tr(\grB_{\scc}(O_F))$. It has been
shown in the proof
of \cite[Corollary~3.3.8]{xue-yang-yu:ECNF} that 
$\abs{T_{B, O_F}}=2(w(B)-1)$. The class number
formula (\ref{eq:232}) is now a direct consequence of
(\ref{eq:249}). 
  \end{proof}

\section{The class number formula for \texorpdfstring{$h^1(\calO)$}{h1(O)}}
\label{sec:class-number-formula}

Keep the notation and the assumptions of the previous section.  In
particular, $F$ is a totally real field, and $D$ is a totally definite
quaternion $F$-algebra.  Recall from \eqref{eq:147} that $\scrB^1$ denotes the finite set of CM
$O_F$-orders $B$ with $\abs{\bmu(B)}>2$, where $\bmu(B)$ is the group
of roots of unity in $B$. The main result of this section is the following theorem, which connects the class number
 $h^1(\calO)$ with the left hand side of the spinor trace formula
 \eqref{eq:251}. 

\begin{thm}\label{thm:main}
For any arbitrary $O_F$-order $\calO$ in $D$, we have 
  \begin{equation}
    \label{eq:150}
    \begin{split}
    h^1(\calO)=&2\Mass^1(\calO)\\&+\frac{u(\calO)}{4}\sum_{B\in
      \scrB^1}\frac{(\abs{\bmu(B)}-2)}{w(B)}\sum_{[I]\in
      \Cl_\scc(\calO)}m(B, \calO_l(I), \calO_l(I)^\times),          
    \end{split}
  \end{equation}
where $\Mass^1(\calO)$ can be computed by
\eqref{eq:192}, and $u(\calO)=[(O_{F,+}^\times\cap \Nr(\wcO^\times)): O_F^{\times
  2}]$ as in \eqref{eq:259}. 
\end{thm}

Recall that if $\calO$ is residually unramified, then
$\Nr(\wcO^\times)=\whO_F^\times$ by \eqref{eq:x2}, and hence $u(\calO)=[O_{F,+}^\times: O_F^{\times
  2}]$.  In particular
$h^+(F)=h(F)u(\calO)$ by \eqref{eq:24}. 
Combining Theorem~\ref{thm:main}  with Proposition~\ref{prop:spinor-trace-formula}, we immediately obtain the following corollary. 


\begin{cor}\label{cor:class-number-formula}
 Suppose that $\calO$ is residually unramified. 
Then 
  \begin{equation}
    \label{eq:152}
    h^1(\calO)=2\Mass^1(\calO)+\frac{1}{4h(F)}\sum_{B\in
      \scrB^1}2^{s(B, \calO)}\Delta(B, \calO)(\abs{\bmu(B)}-2)M(B),
  \end{equation}
  where $\Mass^1(\calO)$ can be computed by \eqref{eq:36}, and $s(B, \calO), \Delta(B,
\calO), M(B)$ are given in \eqref{eq:17}  \eqref{eq:79}, \eqref{eq:131}
respectively.

  If further $D$ is ramified at some finite prime of $F$, then
  $h^1(\calO)$ depends only on the genus of $\calO$, and it is given
  by 
  \begin{equation}
    \label{eq:268}
      h^1(\calO)=2\Mass^1(\calO)+\frac{1}{4h(F)}\sum_{B\in
      \scrB^1}(\abs{\bmu(B)}-2)M(B).
  \end{equation}
\end{cor}

\begin{rem}\label{rem:4.3}
If we drop the assumption that $\calO$ is residually unramified, and
assume 
  instead that at least one of the following conditions holds for every
  CM $O_F$-order $B\in \scrB^1$:
  \begin{enumerate}[label=(\roman*)]
  \item $s(B, \calO)=0$;
  \item $m_\grp(B)=0$ for some finite prime $\grp$ of $F$;
    \item $B$ is optimally spinor selective for the genus of $\calO$; 
    \end{enumerate}
then we can still compute $h^1(\calO)$ by combining
Theorem~\ref{thm:main} with the slightly more general version of spinor trace
formula in \cite[Proposition~4.3]{Xue-Yu-Selec-2022}. As mentioned in
Remark~\ref{rem:why-res-unramfied}, condition (iii) is difficult to
check in general, but there are known cases beyond the
residually unramified case in \cite[Theorem~2.6]{peng-xue:select}. 
\end{rem}

The rest of this section is devoted to the proof of
Theorem~\ref{thm:main}, so henceforth $\calO$ is an arbitrary
$O_F$-order in $D$. We first give a short analysis of the set
$\scrB^1$, which will  prove its finiteness as a by-product. 
Let $B$ be a member of $\scrB^1$ with fraction field
$K$. For any $\gamma\in \bmu(B)\smallsetminus \{\pm 1\}$, the minimal
polynomial of $\gamma$ over $F$ is of the form
\begin{equation}
  \label{eq:141}
P_b(T):=T^2-b T+1\in F[T], 
\end{equation}
where $b=\Tr_{K/F}(\gamma)\in O_F$. Necessarily
$4-b^2\in F^\times_+$ since $K/F$ is a CM-extension.
This leads to the consideration of the set 
\begin{equation}
  \label{eq:140}
\calT:=\{b\in O_F\mid  4-b^2\in F^\times_+ \}, 
\end{equation}
which is finite since $O_F$ is discrete in $F\otimes_\Q\bbR$. For each
$b\in \calT$, let $\calK_b:=F[T]/(P_b(T))$ and $\bar{T}$ be the
canonical image of $T$ in $\calK_b$.  
As $b$ ranges over $\calT$, the ordered pair $(\calK_b, \bar T)$ ranges over
all  CM-extensions $K/F$ with
$\abs{\bmu(K)}>2$ together with a marked
root of unity of order $>2$.  Let $\scrB_b^1$ be the
finite set of $O_F$-orders in $\calK_b$ as follows: 
\begin{equation}
  \label{eq:142}
\scrB_b^1:=\{B\subset \calK_b \mid O_F[\bar T]\subseteq B\subseteq O_{\calK_b}\}. 
\end{equation}
Clearly, $\scrB^1:=\bigcup_{b\in \calT} \scrB_b^1$, which implies that
$\scrB^1$ is a finite set. In fact, the
fiber of the canonical map 
\begin{equation}
  \label{eq:143}
\bigsqcup_{b\in \calT} \scrB_b^1\to \scrB^1  
\end{equation}
over each $B\in \scrB^1$ has exactly $(\abs{\bmu(B)}-2)/2$ elements.

To prove Theorem~\ref{thm:main}, we apply the
\emph{Selberg trace formula} for compact quotient. See 
\cite[\S1]{Arthur-Intro-trace-formula-Clay4-2005} and
\cite[\S5]{Pizer1973} for brief introductions. 
Let $H$ be a locally compact totally disconnected topological group
(a group of \emph{td-type} as in \cite[\S
1]{Cartier-Corvallis}). We further assume that $H$ is unimodular with
a Haar measure $dx$.  If $H_1$ is a unimodular closed subgroup of $H$
with Haar measure $dy$, then by an abuse of notation, we still write
$dx$ for the induced right $H$-invariant measure \cite[Corollary~4, \S
III.4]{Nachbin-Haar-Integral} on the homogeneous space $H_1\bsh H$,
which is characterized by the following integration formula
\begin{equation}\label{eq:55}
\int_H f(x) dx= \int_{H_1\backslash H}\int_{H_1} f(yx)dy dx,
\qquad \forall f\in C_c^\infty(H).   
\end{equation}
Here $C_c^\infty(H)$ denotes the space of locally constant  $\bbC$-valued
functions on $H$ with compact support.  If $V\subseteq H$ is an open
subset of $H$, we write\footnote{Admittedly, this notation is rather nonstandard.   However,
  since we will be dealing with multiple  homogeneous spaces soon, it
might be helpful to keep track of the space at hand. }
$\Vol(V, H_1\bsh H)\in [0, \infty]$ for the volume of the canonical
image of $V$ in $H_1\bsh H$. In other words,
\begin{equation}
  \label{eq:194}
\Vol(V, H_1\bsh H):=\int_{H_1\bsh H} \cf_{H_1V}(x) dx.   
\end{equation}
Here $\cf_{H_1V}(x)$ denotes the characteristic function on $H$ of the open
subset $H_1V\subseteq H$, which descends to the characteristic function
on $H_1\bsh H$ for the image of $V$. 
Now let $H_2\subseteq H_1$ be another  unimodular
closed subgroup with Haar measure $dz$. Suppose that $V\subseteq H$ is
an open compact subgroup. Then for any $x\in H$, we have 
\begin{equation}
  \label{eq:196}
\Vol(xV, H_2\bsh H)=\Vol(H_1\cap xV x^{-1}, H_2\bsh H_1)\cdot \Vol(xV,
H_1\bsh H). 
\end{equation}
For example, if $H_2$ is the trivial group, then we get 
\begin{equation}
  \label{eq:260}
  \Vol(xV, H_1\bsh H)=\frac{\Vol(V, H)}{\Vol(H_1\cap x V x^{-1}, H_1)}. 
\end{equation}
If $x\in H_1$, then (\ref{eq:196}) simplifies into 
\begin{equation}
  \label{eq:197}
\Vol(xV, H_2\bsh H)=\Vol(x(H_1\cap V), H_2\bsh H_1)\cdot \Vol(V,
H_1\bsh H). 
\end{equation}




Suppose that $\Gamma$ is a  discrete cocompact subgroup of
$H$.  Denote by  $L^2(\Gamma\bsh H)$  the Hilbert space of $\bbC$-valued
square-integrable functions on $\Gamma\bsh H$. For any $f\in
C_c^\infty(H)$, we form an operator $R(f): L^2(\Gamma\bsh H)\to
L^2(\Gamma\bsh H)$ by 
\begin{equation}
  \label{eq:53}
  (R(f)\phi)(y)=\int_{H}f(x)\phi(yx)dx=\int_{H}f(y^{-1}x)\phi(x)dx
\end{equation}
for all $y\in H$ and 
$\phi\in L^2(\Gamma\bsh H)$.  Normalize the Haar measure on $H$ so
that $\Vol(U)=1$ for a fixed  open compact subgroup $U\subseteq H$. 
It is clear from
 (\ref{eq:53}) that
 $R(\cf_U)$ is the projection onto the $U$-invariant subspace
 $L^2(\Gamma\bsh H)^U=L^2(\Gamma\bsh H/U)$. Therefore, 
 \begin{equation}
   \label{eq:54}
\Tr(\cf_U)=\dim_\bbC L^2(\Gamma\bsh H/U)=\abs{\Gamma\bsh
  H/U}. 
 \end{equation}
 For any $\gamma\in \Gamma$ and any subset $S$ in $H$, we write
 $S_\gamma$ for the centralizer of $\gamma$ in $S$.  Let
 $\{\gamma\}$ be the $\Gamma$-conjugacy class of $\gamma\in\Gamma$,
 and $\conGa$ be the set of all conjugacy classes of $\Gamma$.
 Applying the Selberg trace formula
 \cite[p.~9]{Arthur-Intro-trace-formula-Clay4-2005} to
 $f=\mathbbm{1}_U$, we obtain
\begin{equation}
  \label{eq:33}
\Tr(\cf_U)=\sum_{\{\gamma\}\in
  \conGa}\int_{\Gamma_\gamma \backslash H}\mathbbm{1}_U(x^{-1}\gamma
x) dx.
\end{equation}
Here each $\Gamma_\gamma$ is equipped with the counting measure, 
and $dx$ is the induced right
$H$-invariant measure on $\Gamma_\gamma \backslash H$ as in
(\ref{eq:55}). It is well known that the right hand side of
(\ref{eq:33}) is a finite sum; see
\cite[Proposition~8.1]{Yu-Indiana-ArithSSlocus}.




Applying (\ref{eq:54}) and (\ref{eq:33}) in the case that 
\begin{equation}
  \label{eq:154}
  H=\whD^1, \qquad U=\wcO^1,\qquad \Gamma=\grD^1, 
\end{equation}
we express $h^1(\calO)$ into a sum of
integrals. Note that $\gamma\in \grD^1$ is central if and only if $\gamma=\pm 1$, in which
case the summand in (\ref{eq:33}) corresponding to
$\{\gamma\}$ reduces to $\Vol(\grD^1\backslash \whD^1)$. From (\ref{eq:149}), the
contribution of the central elements $\{\pm 1\}\subset \grD^1$ to $h^1(\calO)$ is
precisely $2\Mass^1(\calO)$, the first term in the right hand side of
(\ref{eq:150}). Thus the proof of Theorem~\ref{thm:main} is reduced to
matching the second line of (\ref{eq:150}) with the contributions of the noncentral classes
in $\{\grD^1\}$.

\begin{rem}\label{rem:old-method}
  Traditionally, the orbital integral in the Selberg trace formula \eqref{eq:33} is
  expanded out  a bit further and reads
  \begin{equation}\label{eq:3}
    \int_{\Gamma_\gamma \backslash
  H}\mathbbm{1}_U(x^{-1}\gamma x) dx=\Vol(\Gamma_\gamma\bsh
H_\gamma)\int_{H_\gamma \backslash H}\mathbbm{1}_U(x^{-1}\gamma x)
dx.
  \end{equation}
Here the volume $\Vol(\Gamma_\gamma\bsh H_\gamma)$ depends on the
choice of the Haar measure on $H_\gamma$. However, as a whole the right hand side of \eqref{eq:3} is independent of such
choices. Suppose that $\gamma\neq \pm 1$, and let $K$ be the
CM-field $F(\gamma)$ generated by $\gamma$ over $F$. Then $K$
coincides with the centralizer of $\gamma$ in $D$, so 
$\Gamma_\gamma=K^1$ and $H_\gamma=\whK^1$.  Let $O_K$ be the ring of
integers of $K$. We normalize the Haar measure on $\whK^1$ so that its
unique maximal open compact subgroup $\whO_K^1$ has volume one. Now it
is a classical result of Takashi Ono \cite{Ono:arith-tori-1961} (simplified by Sasaki in 
\cite[Theorem~3]{Sasaki-Nagoya-1988}) that
\begin{equation}
  \label{eq:4}
  \Vol(K^1\bsh \whK^1)=\frac{h(K/F)}{2^{t-1}Q_{K/F}\abs{\bmu(K)}}=\frac{h(K/F)}{2^t[O_K^\times:O_F^\times]}. 
\end{equation}
Here the invariants in the above formula are as follows
\begin{itemize}
\item $h(K/F)=h(K)/h(F)$ is the relative class number of the
  CM-extension $K/F$ \cite[Theorem~4.10]{Washington-cyclotomic};
\item     $t$ is
  the number of finite primes of $F$ ramified in $K$;
\item   $Q_{K/F}:=[O_K^\times: O_F^\times\bmu(K)]$ is
the Hasse unit index
\cite[\S 13, p.~69]{Conner-Hurrelbrink}, which takes value either $1$
or $2$ (see also \cite[Theorem~4.12]{Washington-cyclotomic}). 
\end{itemize}
An unsuspecting reader might expect that our method of proof for
the formula of $h^1(\calO)$ is simply working out the finitely many
orbital integrals $\int_{H_\gamma \backslash H}\mathbbm{1}_U(x^{-1}\gamma x)
dx$ term by term. There are a couple of reasons going against this
approach, which require some explanations.

The reader may have already observed that the invariant  $t$ makes no appearance 
in our formula for $h^1(\calO)$. The
cancellation does not happen at the spot when we multiply
$\Vol(K^1\bsh \whK^1)$ with $\int_{\whK^1 \backslash \whD^1}\mathbbm{1}_U(x^{-1}\gamma
x)$. For example, let $p\in \bbN$ be  prime with $p>3$. Let
$F=\Q(\sqrt{p})$ and $D:=D_{\infty_1, \infty_2}$ be the unique totally
definite quaternion
$F$-algebra that is unramified at all the finite primes of $F$.  Suppose that
$\calO$ is a maximal order in $D$ that contains a root of unity
$\gamma\in \calO^1$  of order $3$.  Then $K=F(\sqrt{-3})$, and $O_F[\gamma]$
coincides with $O_K$  by
\cite[\S7]{li-xue-yu:unit-gp}. Thus $\calO\cap K=O_F[\gamma]=O_K$, and
$\scrB_b^1$ consists of the unique CM $O_F$-order $O_K$ for $b=\Tr(\gamma)=-1$. 
Clearly, 
\[t=
  \begin{cases}
    1 &\text{if } p\equiv 2\pmod{3};\\
    2 &\text{if } p\equiv 1\pmod{3}. 
  \end{cases}
\]
On the other hand, it can be shown that
$\int_{\whK^1 \backslash \whD^1}\mathbbm{1}_U(x^{-1}\gamma x)dx=1$ in
both cases for $p$.
Already, we see that the invariant $t=2$ does show up in the result of
\eqref{eq:3} when $p\equiv 1\pmod{3}$.  Curiously,
$\gamma$ and its complex conjugate $\bar{\gamma}\in K$ belong to the
same $D^1$-conjugacy class if and only if $p\equiv 2\pmod{3}$ by Example~\ref{ex:D1-conj}. Thus if
$p\equiv 1\pmod{3}$, we get more $D^1$-conjugacy classes of order $3$,
with each contributing less to $h^1(\calO)$,
so the total contributions are balanced out and $t$ disappears in the final
formula for $h^1(\calO)$. This interesting phenomenon calls for an
approach that dispels the mystery behind such  cancellations.

Yet another reason for not taking the term-by-term approach is the
presence of the optimal spinor selectivity symbols $\Delta(B, \calO)$
and $s(B, \calO)$ in the expected formula for
$h^1(\calO)$. As we have discovered with concrete examples, as soon as
one tries to compute the orbital integral $\int_{H_\gamma \backslash H}\mathbbm{1}_U(x^{-1}\gamma x)
dx$ by brute force, the 
selectivity theory inevitably shows up and get mixed with other volume
calculations, making it more unmanageable. The new approach to be
explained below  will
kill two birds with one stone: the invariant $t$ never
appears, and the selectivity theory is hidden within the summation $\sum_{[I]\in
      \Cl_\scc(\calO)}m(B, \calO_l(I), \calO_l(I)^\times)$ and 
only manifests itself when we apply
the spinor trace formula at the very end. 
\end{rem}

We return to the proof of Theorem~\ref{thm:main}. Let $H=\whD^1,
U=\wcO^1$ and $\Gamma=D^1$ be as in \eqref{eq:154}. We have already
obtained the following formula: 
\begin{equation}
  \label{eq:13}
h^1(\calO)=\sum_{\{\gamma\}\in
  \conGa}\int_{\Gamma_\gamma \backslash H}\mathbbm{1}_U(x^{-1}\gamma
x) dx,
\end{equation}
and the contribution of the central elements $\{\pm 1\}$ has been
shown to be the mass part of the desired formula \eqref{eq:150}. 
Assume that $\gamma\in
\grD^1\smallsetminus \{\pm
1\}$ for the rest of this section.  By an abuse of notation, we 
write $K_\gamma$ for the CM-field $F(\gamma)$, which is the
centralizer of $\gamma$ in $D$. 
Let us put 
\begin{equation}
  \label{eq:57}
  \wcEo(\gamma):=\{x\in \whD^1 \mid x^{-1}\gamma x\in
  \wcO^1\},  
\end{equation}
so that
\begin{equation}
  \label{eq:56}
\int_{\Gamma_\gamma \backslash
  H}\mathbbm{1}_U(x^{-1}\gamma x) dx=\Vol(\wcEo(\gamma), K_\gamma^1\bsh \whD^1). 
\end{equation}
Since $\wcEo(\gamma)$ is an open subset of $\whD^1$, the
volume is nonzero if and only if $  \wcEo(\gamma)\neq
\emptyset$.  The latter condition implies that
$b=\Tr(\gamma)\in \calT$ as in (\ref{eq:140}), or equivalently,  $\gamma\in \bmu(K_\gamma)$ by
\cite[Lemma~1.6]{Washington-cyclotomic}. For each $b\in \calT$,  we put
\begin{equation}
  \Gamma(b):=\{\gamma\in \grD^1\mid \Tr(\gamma)=b\},   
\end{equation}
which forms a single $\grD^\times$-conjugacy class in $\grD^1$. Given
$\gamma\in \Gamma(b)$,  the $F$-embedding 
\begin{equation}
  \label{eq:80}
 \varphi_\gamma: \calK_b\hookrightarrow \grD\qquad \bar T\mapsto \gamma,
\end{equation} 
identifies  $\calK_b$ with $K_\gamma$. For each
 $B\in \scrB_b^1$, we define 
\begin{equation}
\label{eq:213}
 \whE(\gamma, B):=\{g=(g_\grp)\in \whD^\times \mid K_\gamma\cap g\wcO
  g^{-1}=\varphi_\gamma(B)\}   
\end{equation}
and put $\wcEo(\gamma, B):=\whE(\gamma,
  B)\cap \whD^1$. There is a  left $K_\gamma^1$-equivariant decomposition
  \begin{equation}
    \label{eq:58}
  \wcEo(\gamma)=\bigsqcup_{B\in \scrB^1_b}
  \wcEo(\gamma, B), 
  \end{equation}
which implies that 
\begin{equation}
  \label{eq:59}
\int_{\Gamma_\gamma \backslash
  H}\mathbbm{1}_U(x^{-1}\gamma x) dx=\sum_{B\in \scrB^1_b}
 \Vol(\wcEo(\gamma, B), K_\gamma^1\bsh \whD^1). 
\end{equation}

Grouping together all those $\Gamma$-conjugacy
classes $\{\gamma\}$ with the same minimal polynomial  in the
right hand side of (\ref{eq:13}), we obtain 

\begin{equation}
  \label{eq:91}
  \begin{split}
    h^1(\calO)&=2\Mass^1(\calO)+\sum_{b\in \calT}\,\sum_{\{\gamma\}\in \conGab}\int_{\Gamma_\gamma \backslash H}\mathbbm{1}_U(x^{-1}\gamma
  x)dx\\
   &=2\Mass^1(\calO)+\sum_{b\in \calT}\, \sum_{B\in
     \scrB_b^1}\,\sum_{\{\gamma\}\in \conGab}\Vol( \wcEo(\gamma, B), K_\gamma^1\bsh\whD^1).
  \end{split}
\end{equation}

\begin{prop}\label{prop:key-equality}
For each $b\in \calT$ and $B\in \scrB_b^1$, we have 
\begin{equation}
\label{eq:212}
  \sum_{\{\gamma\}\in \conGab}\Vol(\wcEo(\gamma, B), K_\gamma^1\bsh\whD^1)=\frac{u(\calO)}{2w(B)}\sum_{[I]\in
      \Cl_\scc(\calO)}m(B, \calO_l(I), \calO_l(I)^\times).
\end{equation}
\end{prop}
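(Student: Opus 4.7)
The plan is to fix a base optimal embedding, rewrite the left-hand side of \eqref{eq:212} as an adelic double-coset count, and compare it with Lemma~\ref{lem:adelic-trace} via a layered quotient. I will fix $\gamma_0 \in \Gamma(b)$, set $\varphi := \varphi_{\gamma_0}$, $K := \varphi(\calK_b)$, and $\wcE := \wcE(\varphi, B, \calO)$. Since $\Gamma(b)$ is a single $\grD^\times$-conjugacy class, Skolem--Noether gives a bijection between $\conGab$ and $K^\times \bsh \grD^\times/\grD^1$, via $\alpha \mapsto \gamma_\alpha := \alpha^{-1}\gamma_0\alpha$. Using the transformation rule \eqref{eq:170}, I will obtain $\wcEo(\gamma_\alpha, B, \calO) = \alpha^{-1}(\wcE \cap \alpha\whD^1)$ and $K_{\gamma_\alpha}^1 = \alpha^{-1}K^1\alpha$; left multiplication by $\alpha$ is a measure-preserving bijection $K_{\gamma_\alpha}^1 \bsh \whD^1 \xrightarrow{\sim} K^1 \bsh \alpha\whD^1$, so the $\alpha$-summand on the left of \eqref{eq:212} equals $\Vol(\wcE \cap \alpha\whD^1;\, K^1 \bsh \alpha\whD^1)$.

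For a representative $y$ of a $K^1$-left, $\wcO^1$-right double coset in $\wcE \cap \alpha\whD^1$, the optimality relation $K \cap y\wcO y^{-1} = \varphi(B)$ combined with Kronecker's theorem (norm-one algebraic integers in a CM field are roots of unity) yields $K^1 \cap y\wcO^1 y^{-1} = \varphi(\bmu(B))$; since $\Vol(\wcO^1) = 1$, each such double coset contributes $1/|\bmu(B)|$ to the volume. Because $\whD^1$ is normal in $\whD^\times$ (so $K^\times \cap \alpha\whD^1\alpha^{-1} = K^1$), one has the disjoint decomposition $K^\times \bsh \grD^\times\whD^1 = \bigsqcup_\alpha K^1 \bsh \alpha\whD^1$, and summing over $\alpha$ will give
\[
\sum_{\{\gamma\} \in \conGab} \Vol(\wcEo(\gamma, B, \calO);\, K_\gamma^1 \bsh \whD^1) = \frac{1}{|\bmu(B)|}\, \bigl|K^\times \bsh (\wcE \cap \grD^\times\whD^1)/\wcO^1\bigr|.
\]

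On the other side, Lemma~\ref{lem:adelic-trace} specializes at $[J]_\scc = [\calO]_\scc$ to
\[
\sum_{[I] \in \Cl_\scc(\calO)} m(B, \calO_l(I), \calO_l(I)^\times) = \bigl|K^\times \bsh \wcE(B, \calO, [\calO]_\scc)/\wcO^\times\bigr|.
\]
Setting $\wcO^\times_+ := \{u \in \wcO^\times : \Nr(u) \in O_{F,+}^\times\}$, the right-$\wcO^\times$-invariance of $\wcE$ gives $\wcE(B, \calO, [\calO]_\scc) = (\wcE \cap \grD^\times\whD^1)\wcO^\times$, and $x\wcO^\times \cap \grD^\times\whD^1 = x\wcO^\times_+$ (from $\Nr(\grD^\times\whD^1) = F_+^\times$), so this count equals $\bigl|K^\times \bsh (\wcE \cap \grD^\times\whD^1)/\wcO^\times_+\bigr|$. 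The proposition will therefore reduce to showing that the natural surjection
\[
\pi \colon K^\times \bsh (\wcE \cap \grD^\times\whD^1)/\wcO^1 \twoheadrightarrow K^\times \bsh (\wcE \cap \grD^\times\whD^1)/\wcO^\times_+
\]
has constant fibers of cardinality $u(\calO)|\bmu(B)|/(2w(B))$.

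The fiber of $\pi$ over the class of $x$ is identified with $H_x\wcO^1 \bsh \wcO^\times_+$, where $H_x = x^{-1}\varphi(\wh{B}^{\times}_{+})x = x^{-1}K^\times x \cap \wcO^\times_+$ and $\wh{B}^{\times}_{+} := \{\hat b \in \wh B^\times : \Nm_{K/F}(\hat b) \in O_{F,+}^\times\}$. Via the reduced-norm isomorphism $\wcO^\times_+/\wcO^1 \simeq O_{F,+}^\times \cap \Nr(\wcO^\times)$, the fiber becomes the finite abelian group $(O_{F,+}^\times \cap \Nr(\wcO^\times))/\Nm_{K/F}(\wh{B}^{\times}_{+})$, visibly independent of $x$. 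The hard part will be the Hasse-type identification $\Nm_{K/F}(\wh{B}^{\times}_{+}) = \Nm_{K/F}(B^\times)$: starting from $u \in O_{F,+}^\times \cap \Nm(\wh K^\times)$, Hasse's norm theorem for the quadratic extension $K/F$ produces a global $k \in K^\times$ with $\Nm(k) = u$, and the relation $(k)\overline{(k)} = O_K$ (which forces $(k)$ to be anti-self-conjugate) permits an adjustment of $k$ by an element of $K^1$ that lands it in $B^\times$ after matching the local data. Granting this, the elementary identity $[\Nm_{K/F}(B^\times) : O_F^{\times 2}] = [B^\times : \bmu(B) O_F^\times] = 2w(B)/|\bmu(B)|$ (which follows from Kronecker together with $\bmu(B) \cap O_F^\times = \{\pm 1\}$), combined with the definition $u(\calO) = [(O_{F,+}^\times \cap \Nr(\wcO^\times)) : O_F^{\times 2}]$, immediately produces the desired fiber cardinality and completes the proof.
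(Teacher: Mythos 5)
Your overall strategy is sound and essentially dual to the paper's: you start from the left-hand side and pack the $\gamma$-sum into one adelic double-coset count, then peel off the layers $\wcO^1 \subset \wcO^\times_+ \subset \wcO^\times$ (the paper instead starts from the right-hand side, works in the quotients $H = \whD^\times/\whO_F^\times$, $H_1 = \grD^\times\whD^1\whO_F^\times/\whO_F^\times$, $H_2 = K^\times/O_F^\times$, inserts the factor $u(\calO)$ via $[H_1 \cap U : U_1]$, and then decomposes $H_1$ along (\ref{eq:222}) to recover the $\gamma$-sum). The first half of your argument is correct: the identification $K^\times \cap \alpha\whD^1\alpha^{-1} = K^1$, the contribution $1/|\bmu(B)|$ of each $K^1$--$\wcO^1$ double coset via Kronecker's theorem, the reduction via Lemma~\ref{lem:adelic-trace}, and the observation $x\wcO^\times \cap \grD^\times\whD^1 = x\wcO^\times_+$ are all fine.

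The gap is in the fiber computation of $\pi$. The double-coset spaces in question involve the \emph{global} $K^\times$ on the left (this is what Lemma~\ref{lem:adelic-trace} gives). Hence the stabilizer you need is
\[
x^{-1}K^\times x \cap \wcO^\times_+ \;=\; x^{-1}\varphi(B^\times)x,
\]
the \emph{global} unit group $B^\times$ (an element $x^{-1}\varphi(k)x$ with $k \in K^\times$ lies in $\wcO^\times_+$ precisely when $k, k^{-1} \in B$, using $\varphi(K)\cap x\wcO x^{-1} = \varphi(B)$; the positivity condition is automatic since $\Nm_{K/F}$ on a CM field is totally positive). Your displayed equality $H_x = x^{-1}\varphi(\wh{B}^\times_+)x = x^{-1}K^\times x \cap \wcO^\times_+$ is therefore false: the middle term is an open (hence uncountably infinite) subgroup of $\wcO^\times_+$, while the right term is a finitely generated group. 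With the correct identification $H_x = x^{-1}\varphi(B^\times)x$, the fiber is, via $\Nr$, $\bigl(O_{F,+}^\times\cap\Nr(\wcO^\times)\bigr)/\Nm_{K/F}(B^\times)$, and your elementary identity $[\Nm_{K/F}(B^\times):O_F^{\times 2}] = 2w(B)/|\bmu(B)|$ finishes the count immediately. In particular, the ``hard part'' $\Nm_{K/F}(\wh{B}^\times_+) = \Nm_{K/F}(B^\times)$ is not needed at all; it is an artifact of the misidentification. Moreover, as a standalone statement that claim is suspect --- being a local norm from $B_\grp^\times$ everywhere does not imply being a global norm from $B^\times$, and your sketch (producing $k$ via Hasse's norm theorem and then ``adjusting by an element of $K^1$'' to land in $B^\times$) does not justify the existence of the adjusting element: the ideal $(k)$ satisfies $(k)\overline{(k)}=O_K$ but need not be trivial, and there is no reason it can be killed by multiplying by something of norm $1$.
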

Note that the right hand side of \eqref{eq:212} depends only on $B$ and
not on $b\in \calT$.  For each $B\in \scrB^1$, the number of $b\in
\calT$ with $B\in \scrB_b^1$ is precisely $(\abs{\bmu(B)}-2)/2$.
Thus
Theorem~\ref{thm:main} follows directly from
Proposition~\ref{prop:key-equality}.

Proposition~\ref{prop:key-equality} is nonintuitive  in a couple of ways. The summation on the
right hand side of \eqref{eq:212} is over $\Cl_\scc(\calO)$.  As a double coset space,  it
is given by $D^\times\bsh \big(
D^\times\whD^1\wcO^\times\big)/\wcO^\times$ rather than $D^1\bsh
\whD^1/\wcO^1$.  This discrepancy is further reflected in the adelic
description of the  summation itself, which we recall below.  For each $\gamma\in \Gamma(b)$ and each 
 $B\in \scrB_b^1$,  consider the set $\wcE(\gamma, B)\subset \whD^\times$ defined in
 \eqref{eq:213}.  Note that $\wcE(\gamma, B)$ is left translation invariant by
 $\whK_\gamma^\times$ and right translation invariant by the
 normalizer $\calN(\wcO)\subset \whD^\times$. 
For any other
 $\gamma'\in \Gamma(b)$, there exists $\alpha\in D^\times$ such that
 $\gamma'=\alpha^{-1}\gamma\alpha$. 
 A straightforward computation shows that 
 \begin{equation}
   \label{eq:40}
   \wcE(\gamma', B)=\wcE(\alpha^{-1}\gamma \alpha, B)=\alpha^{-1}\wcE(\gamma, B). 
 \end{equation}
Let us define
\begin{equation}
  \label{eq:41}
  \wcE_\scc(\gamma, B):=   \wcE(\gamma, B)\cap D^\times\whD^1\wcO^\times=\big(\wcE(\gamma, B)\cap D^\times\whD^1\big)\wcO^\times.
\end{equation}
Clearly, $\wcE_\scc(\gamma, B)$ is left translation invariant by
$K_\gamma^\times$ and right translation invariant by $\wcO^\times$. 
 From
\cite[Lemma~4.5]{Xue-Yu-Selec-2022}, for any $\gamma\in \Gamma(b)$ we have
\begin{equation}
  \label{eq:21}
  \sum_{[I]\in \Cl_\scc(\calO)}
m(B,\calO_l(I),
    \calO_l(I)^\times)=\abs{K_\gamma^\times\bsh \whE_\scc(\gamma, B)/\wcO^\times}. 
\end{equation}
Thus to prove (\ref{eq:212}), it is enough to fix one $\gamma_0\in
\Gamma(b)$ 
and  show that 
\begin{equation}\label{eq:1x}
\frac{u(\calO)}{2w(B)}\abs{K_{\gamma_0}^\times\bsh
  \whE_\scc(\gamma_0, B)/\wcO^\times}= \sum_{\{\gamma\}\in
  \conGab}\Vol(\wcEo(\gamma, B), K_\gamma^1\bsh\whD^1). 
\end{equation}
Comparing both sides of   \eqref{eq:1x}, we see that the double coset
space on the left hand side  involves subsets of $\whD^\times$, while
the right hand side only involves those of $\whD^1$. This difference
in nature of the two sides of \eqref{eq:1x} is what causes the
technicality in  its  proof.

To bridge such differences, we forget the notation in \eqref{eq:154}
and study another set of groups previously considered in
\eqref{eq:198}: 
\begin{equation}\label{eq:2}
H:=\whD^\times/\whO_F^\times, \qquad U:=\wcO^\times/\whO_F^\times,
\qquad \Omega:=\grD^\times\whO_F^\times/\whO_F^\times\simeq \grD^\times/O_F^\times.
\end{equation}
As explained in the proof of Lemma~\ref{lem:mass-sc},  $H$ is a
unimodular group of td-type, and $\Omega$ is discrete cocompact in $H$.
We equip $\Omega$ with the counting measure, and normalize
the Haar measure on $H$ so that $\Vol(U)=1$. The image of $\whD^1$
inside $H$ is $(\whD^1\whO_F^\times)/\whO_F^\times$, which is
canonically isomorphic to $\whD^1/(\whD^1\cap \whO_F^\times)$. 
Let us put 
\begin{equation}
  C:=\whD^1\cap \whO_F^\times= \prod_\grp\{\pm 1\}, 
\end{equation}
where the product
ranges over all finite primes of $F$. Clearly,  $C$ is a
compact subgroup of $\wcO^1$, and $D^1\cap C=\{\pm 1\}$.  Consider the
following subgroups of $H$: 
\[D^\dagger:=D^1/\{\pm 1\},\qquad \whD^\dagger:=\whD^1/C,\qquad 
\wcO^\dagger:=\wcO^1/C.  \]
There is a canonical bijection between double coset
spaces as follows 
\begin{equation}
  \label{eq:1}
  D^1\bsh \whD^1/\wcO^1\xrightarrow{\simeq}   D^\dagger\bsh \whD^\dagger/\wcO^\dagger. 
\end{equation}
We shall make no explicit usage of this bijection below, but this is
one of the motivations for our proof of
Proposition~\ref{prop:key-equality}.

\begin{lem}\label{lem:adelic-intersection}
 $ (\grD^\times\whD^1)\cap \whO_F^\times=O_F^\times C$. 
\end{lem}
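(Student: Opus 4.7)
The inclusion $O_F^\times C\subseteq (\grD^\times\whD^1)\cap \whO_F^\times$ is immediate: $O_F^\times\subseteq F^\times\subseteq \grD^\times$, while $C\subseteq \whD^1$ by definition, so $O_F^\times C\subseteq \grD^\times\whD^1$; and $O_F^\times\subseteq \whO_F^\times$ together with $C\subseteq \whO_F^\times$ gives $O_F^\times C\subseteq \whO_F^\times$.

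For the reverse inclusion, the plan is to use the Hasse principle for squares together with the totally definite hypothesis on $\grD$. Given $a\in (\grD^\times\whD^1)\cap \whO_F^\times$, write $a=\alpha y$ with $\alpha\in \grD^\times$ and $y\in \whD^1$. Since $a\in \whO_F^\times\subseteq \whF^\times$ is central in $\whD^\times$, we have $\bar a=a$, hence
\[
\Nr(\alpha)=\Nr(a)\Nr(y)^{-1}=\Nr(a)=a\bar a=a^2\in \whF^\times.
\]
Thus $\Nr(\alpha)\in F^\times$ is a square in $\whF^\times$, i.e., a square at every finite place of $F$. At every infinite place $\sigma$ of $F$, we have $\sigma\in \Ram_\infty(\grD)$ because $\grD$ is totally definite, so the Hasse-Schilling-Maass equality $\Nr(\grD^\times)=F_\grD^\times=F_+^\times$ forces $\sigma(\Nr(\alpha))>0$, which makes $\Nr(\alpha)$ a square in $F_\sigma^\times=\R^\times$ as well.

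By the Hasse principle for squares (equivalently, Hasse-Minkowski applied to the form $Y^2-\Nr(\alpha)Z^2$), there exists $c\in F^\times$ with $\Nr(\alpha)=c^2$. Comparing with the identity $\Nr(\alpha)=a^2$ inside $\whF^\times$, we deduce that $(a c^{-1})^2=1$ in $\whF^\times$, i.e., $a c^{-1}=:\epsilon\in \{\pm 1\}^{\Pl_f(F)}=C$. Hence $a=c\epsilon$ with $c\in F^\times$ and $\epsilon\in C\subseteq \whO_F^\times$. Since $a\in \whO_F^\times$ and $\epsilon^{-1}\in \whO_F^\times$, the element $c=a\epsilon^{-1}$ lies in $F^\times\cap \whO_F^\times=O_F^\times$. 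Therefore $a=c\epsilon\in O_F^\times C$, completing the proof.

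The only nontrivial ingredient is the Hasse principle step, which is classical; everything else is book-keeping with reduced norms and the totally definite assumption.
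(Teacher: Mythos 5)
Your proof is correct and takes essentially the same route as the paper: compute that the reduced norm of the given element is a totally positive element of $F^\times$ that is a square in $\whF^\times$, apply the Hasse principle for squares (Hasse--Minkowski) to extract a square root $c\in F^\times$, observe the quotient lies in $C$, and finally pin $c$ down to $O_F^\times$. The paper packages the last step slightly more directly by noting at the outset that $\Nr(x)\in F_+^\times\cap\whO_F^{\times 2}=O_F^{\times 2}$, but the underlying argument is identical.
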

\begin{proof}
Clearly, $ (\grD^\times\whD^1)\cap \whO_F^\times\supseteq O_F^\times C$.   For any $x\in (\grD^\times\whD^1)\cap \whO_F^\times$, we have
  \[\Nr(x)\in \Nr(D^\times)\cap \whO_F^{\times 2}=F_+^\times\cap
  \whO_F^{\times 2}=O_F^{\times 2},\]
where the last equality follows from Hasse-Minkowski Theorem
\cite[Theorem~1.6]{Platonov-Rapinchuk}. Pick $\xi\in
O_F^\times$ such that $\xi^2=\Nr(x)$. Then $\xi^{-1}x\in \whD^1\cap
\whO_F^\times=C$. It follows that $(\grD^\times\whD^1)\cap
\whO_F^\times\subseteq O_F^\times C$, and the lemma is proved. 
\end{proof}

An important link between the groups  $\whD^\dagger \subseteq
H$ will be the following intermediate group $H_1$ 
defined below.  


\begin{lem} The  group 
\begin{equation}
  \label{eq:199}
H_1:=\big(\grD^\times\whD^1\whO_F^\times\big)/\whO_F^\times\simeq
(\grD^\times\whD^1)/(O_F^\times C)
\end{equation}
is a unimodular closed normal subgroup of $H$, and it contains 
\begin{equation}
  \label{eq:208}
\wcO^\dagger=(\wcO^1\whO_F^\times)/\whO_F^\times\simeq \wcO^1/C
\end{equation}
as an open compact subgroup. In particular, $\whD^\dagger$ is
open in $H_1$ as well. 
\end{lem}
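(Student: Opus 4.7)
The plan has three parts: establish the algebraic structure of $H_1$, prove closedness in $H$, and then handle $U_1$, $\whD^1/C$, and unimodularity. For the algebraic content, since $\whD^1$ is the kernel of $\Nr\colon\whD^\times\to\whF^\times$ and $\whO_F^\times$ lies in the center of $\whD^\times$, both are normal, so $\whD^1\whO_F^\times$ is normal and $\grD^\times\whD^1\whO_F^\times$ is a subgroup of $\whD^\times$. Using Hasse--Schilling--Maass (\ref{eq:15}) one rewrites
\[\grD^\times\whD^1\whO_F^\times=\Nr^{-1}\bigl(F_+^\times(\whO_F^\times)^2\bigr),\]
which makes normality in $\whD^\times$ transparent as the preimage of a subgroup under a homomorphism into an abelian group. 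The displayed algebraic isomorphism $H_1\simeq(\grD^\times\whD^1)/(O_F^\times C)$ is then the second isomorphism theorem combined with Lemma~\ref{lem:adelic-intersection}.

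For closedness I would pass to the continuous homomorphism $\whD^\times/\whO_F^\times\to\whF^\times/(\whO_F^\times)^2$ induced by $\Nr$ (using $\Nr(\whO_F^\times)=(\whO_F^\times)^2$), under which $H_1$ is the preimage of the image of $F_+^\times$. The Hasse principle for squares in $F^\times$ gives $F_+^\times\cap(\whO_F^\times)^2=(O_F^\times)^2$, so this image is isomorphic to $F_+^\times/(O_F^\times)^2$. I would show this image is in fact \emph{discrete}, and hence closed. Taking representatives of the finite (by Dirichlet) group $O_{F,+}^\times/(O_F^\times)^2$ and, for each nontrivial representative $\beta$, choosing a finite prime $\grp(\beta)$ at which $\beta$ fails to be a local square (such a prime exists by Hasse, since $\beta$ is already a square at every archimedean place), set $W=\prod_\grp W_\grp$ with $W_\grp=(O_{F_\grp}^\times)^2$ at the chosen finitely many primes and $W_\grp=O_{F_\grp}^\times$ elsewhere. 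This is an open neighborhood of $1$ in $\whF^\times$ satisfying $W\cap F_+^\times(\whO_F^\times)^2\subseteq(\whO_F^\times)^2$, which witnesses discreteness.

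For $U_1$, compactness is immediate since it is the continuous image of the compact set $\wcO^1\whO_F^\times$, and the isomorphism $U_1\simeq\wcO^1/C$ is the second isomorphism theorem applied to $\wcO^1\cap\whO_F^\times=\whD^1\cap\whO_F^\times=C$. For openness in $H_1$, I would intersect the open compact $U=\wcO^\times/\whO_F^\times\subseteq H$ with $H_1$; a short reduced-norm computation identifies the index $[U\cap H_1:U_1]$ with $[\Nr(\wcO^\times)\cap O_{F,+}^\times:(O_F^\times)^2]$, which is finite (it divides $|O_{F,+}^\times/(O_F^\times)^2|$). Since $U\cap H_1$ is open in $H_1$ and $U_1$ is a closed (compact) subgroup of finite index, $U_1$ is open in $H_1$; its supergroup $\whD^1/C=\whD^1\whO_F^\times/\whO_F^\times$ is then a union of open $U_1$-translates, hence also open in $H_1$. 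Unimodularity of $H_1$ follows because $\whD^1/C$ is an open unimodular subgroup (as $\whD^1=\SL_1(D)(\whF)$ is the adelic points of a semisimple algebraic group), and each coset of $\whD^1/C$ in $H_1$ is represented by a central element coming from $F_+^\times\subseteq F^\times$; the modular character $\Delta_{H_1}$ is therefore trivial both on the open subgroup (since $\Delta_G|_N=\Delta_N$ for open $N\subseteq G$) and on central transversal representatives (conjugation by a central element preserves Haar measure), so $\Delta_{H_1}\equiv 1$.

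The main obstacle is the discreteness step at the core of closedness: it requires combining the Hasse principle for $F^\times$ with the finiteness of $O_{F,+}^\times/(O_F^\times)^2$ via a prime-by-prime construction of local non-squares. The remaining assertions are essentially bookkeeping with the second isomorphism theorem, a brief index calculation via the reduced norm, and standard facts about modular functions on open subgroups and central extensions.
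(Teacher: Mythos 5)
Your argument for closedness is correct but follows a genuinely different route from the paper's. The paper invokes the cocompactness of $\grD^1$ in $\whD^1$ (Platonov--Rapinchuk) to write $H_1=\Omega\widetilde{Z}$ with $\Omega$ discrete and $\widetilde{Z}$ compact, gets closedness from the Artin--Tate lemma, and then unimodularity from the fact that a group admitting a lattice is unimodular. You instead identify $H_1$ as the preimage under the induced map $H\to\whF^\times/(\whO_F^\times)^2$ of the image of $F_+^\times$, and prove that image is \emph{discrete} (hence closed) via the Hasse principle for squares together with the finiteness of $O_{F,+}^\times/(O_F^\times)^2$. That works, and is arguably more elementary. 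The $U_1$-part (compactness, the index computation $[H_1\cap U:U_1]=[(O_{F,+}^\times\cap\Nr(\wcO^\times)):(O_F^\times)^2]$, and openness of $\whD^1/C$) matches the paper.

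However, your unimodularity argument has a genuine gap. The claim that ``each coset of $\whD^1/C$ in $H_1$ is represented by a central element coming from $F_+^\times\subseteq F^\times$'' is false. Setting $N:=\whD^1\whO_F^\times/\whO_F^\times$, the quotient $H_1/N$ is canonically $\grD^\times/(\grD^1 O_F^\times)$, which the reduced norm identifies with $F_+^\times/(O_F^\times)^2$; but this identification is via $\Nr$, not via central elements. A central element $a\whO_F^\times$ with $a\in F_+^\times$ lies in the coset $\alpha N$ (for $\alpha\in\grD^\times$) precisely when $\Nr(\alpha)/a^2\in(\whO_F^\times)^2$, which by Hasse forces $\Nr(\alpha)\in(F^\times)^2$. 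Since $\Nr(\grD^\times)=F_+^\times$ contains non-squares, cosets without any central representative exist. Note also that $H_1$ is \emph{not} open in $H$ (because $\whO_F^{\times 2}$ is not open in $\whO_F^\times$), so you cannot shortcut to ``open subgroup of the unimodular $H$.'' To repair the step in your own framework you could show directly that conjugation by any $\alpha\in\grD^\times$ preserves Haar measure on $\whD^1$ (the inner automorphism $\mathrm{ad}_\alpha$ of $\SL_1(\grD)$ has Jacobian $1$ on the Lie algebra at every finite place), so the conjugation modulus of $H_1$ on $N$ is trivial on all of $\grD^\times N=H_1$; alternatively, use the paper's argument that $\Omega$ is a lattice in $H_1$.
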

\begin{proof}
Since the quotient $\whD^\times/\whD^1\simeq \whF^\times$ is abelian,
any subgroup of $H$  containing $\whD^\dagger$ is  normal, and the reduced norm map
induces an isomorphism
\begin{equation}
  \label{eq:200}
H_1\bsh H\simeq \whF^\times/(F_+^\times\whO_F^{\times 2}). 
\end{equation}
 By
\cite[Theorem~5.2]{Platonov-Rapinchuk}, there is a compact fundamental
set $Z$ for $\grD^1$ in $\whD^1$. Then $H_1=\Omega \widetilde{Z}$, where $\widetilde{Z}$ denotes the
canonical image of $Z$ in $H$, and hence $H_1$ is closed by
\cite[Lemma~1, \S X.2]{Artin-Tate}.  Now it
follows from \cite[Proposition~B.2.2]{Kazhdan-PropertyT} that $H_1$
is unimodular. 

Clearly,  $\wcO^\dagger$ is a compact subgroup of $H_1$.  
To show that it is
open in $H_1$, it is enough to show that $\wcO^\dagger$ has finite index in the open
subgroup $H_1\cap U$, where $U=\wcO^\times/\whO_F^\times$ as in \eqref{eq:2}. We calculate 
\begin{align*}
&[H_1\cap U: \wcO^\dagger]=[\big((\grD^\times\whD^1\whO_F^\times)\cap
\wcO^\times\big): \wcO^1\whO_F^\times]\\=&[\big((\grD^\times\whD^1)\cap
\wcO^\times\big)\whO_F^\times: \wcO^1\whO_F^\times]
=[\big((\grD^\times\whD^1)\cap
\wcO^\times\big): \big((\grD^\times\whD^1)\cap
\wcO^\times\cap \wcO^1\whO_F^\times\big)]\\
=&[\big((\grD^\times\whD^1)\cap
\wcO^\times\big): \wcO^1 O_F^\times]
=[\Nr\big((\grD^\times\whD^1)\cap
\wcO^\times\big): \Nr(\wcO^1 O_F^\times)]\\
=&[\big(O_{F,+}^\times\cap \Nr(\wcO^\times)\big): O_F^{\times 2}]=u(\calO)<\infty. \qedhere
\end{align*}
\end{proof}
We normalize the Haar measure on $H_1$ so that
$\Vol(\wcO^\dagger)=1$.
From (\ref{eq:260}),
\begin{equation}
  \label{eq:220}
\Vol(\Nr(\wcO^\times),\whF^\times/(F_+^\times\whO_F^{\times
  2}))=\Vol(U, H_1\bsh H)=1/u(\calO)
\end{equation}
with respect to the induced measure on $H_1\bsh H$.

Let us fix $\gamma_0\in \Gamma(b)$ and put $K_0:=K_{\gamma_0}=F(\gamma_0)$. There is a bijection 
\begin{equation}
  \label{eq:201}
K_0^\times \bsh \grD^\times\to \Gamma(b), \qquad K_0^\times\alpha
\mapsto \alpha^{-1}\gamma_0\alpha,  
\end{equation}
which gives rise to a bijection 
\begin{equation}
  \label{eq:202}
K_0^\times \bsh \grD^\times/\grD^1\to \conGab. 
\end{equation}
On the other hand, the reduced norm map induces a bijection 
 \begin{equation}
   \label{eq:203}
K_0^\times \bsh \grD^\times/\grD^1 \xrightarrow[\Nr]{\simeq} F_+^\times/\Nr(K_0^\times).
\end{equation}
Piecing together the above bijections, we obtain 
\begin{equation}
  \label{eq:204}
F_+^\times/\Nr(K_0^\times)\simeq \conGab.
\end{equation}
According to Lemma~\ref{lem:adelic-intersection}, we have 
\begin{equation*}
\grD^\times\cap (\whD^1\whO_F^\times)=\grD^\times\cap \big(\whD^1(O_F^\times
C)\big)=\grD^\times\cap (\whD^1O_F^\times)=(\grD^\times\cap
\whD^1)O_F^\times=\grD^1O_F^\times. 
\end{equation*}
It follows that there is a canonical isomorphism 
\begin{equation}
  \label{eq:206}
K_0^\times \bsh
\big(\grD^\times\whD^1\whO_F^\times\big)/(\whD^1\whO_F^\times)\simeq
K_0^\times \bsh \grD^\times/(\grD^1O_F^\times)= K_0^\times \bsh
\grD^\times/\grD^1. 
\end{equation}

\begin{ex}\label{ex:D1-conj}
  Let $F$ be a real quadratic field, and $D=D_{\infty_1, \infty_2}$ be the unique totally definite quaternion
  $F$-algebra that is unramified at all the finite primes of $F$. Let
  $\gamma\in D^1$ be an element of order $3$, and
  $\bar{\gamma}:=\Tr(\gamma)-\gamma$ be its image under the canonical
  involution. 
 We claim that $\gamma$ and $\bar{\gamma}$ are
  $D^1$-conjugate if and only if $3$ is non-split in $F$.  Since
the set of elements of order $3$ forms a single $D^\times$-conjugacy class,
it is enough to check this for one $\gamma_0$ of order
$3$. 
  Given $a, b\in
  F^\times$, we write $\qalg{a}{b}{F}$ for the quaternion $F$-algebra
  with $F$-basis $\{1, i, j, ij\}$ subject to the following multiplication
  rules
  \[i^2=a, \quad j^2=b, \quad \text{and}\quad ij=-ji.\]
First, suppose that $3$ is non-split in $F$. Then $D=\qalg{-1}{-3}{F}$,
and $\gamma_0:=(-1+j)/2\in D^1$ is an element of order $3$. Clearly,
$\gamma_0$ and 
$\bar{\gamma}_0$ are $D^1$-conjugate since $\bar{\gamma}_0=i \gamma_0
i^{-1}$. Next, suppose that $3$ is split in $F$.  Let $\ell\in \bbN$ be a
prime with $\ell\equiv 2\pmod{3}$. Then $\qalg{-\ell}{-3}{\Q}$ is the
unique quaternion $\Q$-algebra ramified precisely at $\ell$ and
$\infty$. Pick $\ell$ so that it is inert in $F$. Then 
$D=\qalg{-\ell}{-3}{F}$. Take $\gamma_0=(-1+j)/2\in D^1$ as
before. Once again, we have $\bar{\gamma}_0=i \gamma_0
i^{-1}$,  except that this time $\Nr(i)=\ell$.   From \eqref{eq:204},
if $\bar{\gamma_0}$ and $\gamma_0$ are $D^1$-conjugate, then $\ell\in
\Nr(K_0^\times)$, where $K_0=F(\gamma_0)\simeq F(\sqrt{-3})$.
On the other hand,  let $\grp$ be
one of the primes of $O_F$ lying above
$3$, then $F_\grp=\Q_3$, and $(K_0)_\grp=\Q_3(\sqrt{-3})$. Already 
 $\ell$ is not a local norm at $\grp$ since $\ell\equiv 2\pmod{3}$.
Therefore,    $\bar{\gamma_0}$ and $\gamma_0$ are not $D^1$-conjugate
when $3$ is split in $F$. Our claim is verified. 
\end{ex}

We return to the general case where both $F$ and $D$ are arbitrary.

\begin{proof}[Proof of Proposition~\ref{prop:key-equality}]
  As explained in (\ref{eq:1x}),   it is enough to show that
\[\frac{u(\calO)}{2w(B)}\abs{K_0^\times\bsh
  \whE_\scc(\gamma_0, B)/\wcO^\times}= \sum_{\{\gamma\}\in
  \conGab}\Vol(\wcEo(\gamma, B), K_\gamma^1\bsh\whD^1) \]
for a fixed $\gamma_0\in \Gamma(b)$. 
   To bring the
groups $H=\whD^\times/\whO_F^\times$,  $U=\wcO^\times/\whO_F^\times$
and $\Omega=D^\times/O_F^\times$ 
into relevancy, first note that there is a canonical bijection 
\begin{gather}\label{eq:5}
K_0^\times\bsh
  \whE_\scc(\gamma_0, B)/\wcO^\times\simeq
  H_2\bsh\whE_\scc^\sharp(\gamma_0, B)/U,   \\
\text{where}\quad   \whE_\scc^\sharp(\gamma_0, B):=\whE_\scc(\gamma_0,
B)/\whO_F^\times \quad\text{and}\quad H_2:=(K_0^\times\whO_F^\times)/\whO_F^\times=K_0^\times/O_F^\times\subset
\Omega.\notag
\end{gather}
  Equip $H_2$ with the counting measure and let 
$H_1=\grD^\times\whD^1/(O_F^\times C)\subseteq H$ be as in \eqref{eq:199}.
From the definition of $  \whE_\scc(\gamma_0, B)$ in  \eqref{eq:41}, there exists a
complete set of representatives   $y_1, \cdots, y_r\in
H_1$  for the double coset space in
(\ref{eq:5}) so that 
\begin{equation}
H\supset   \whE_\scc^\sharp(\gamma_0, B)=\bigsqcup_{i=1}^r H_2 y_i U. 
\end{equation}
Thanks to (\ref{eq:260}), we have 
\begin{equation}
  \label{eq:219}
\Vol(y_iU , H_2 \bsh H )=\frac{\Vol(U , H )}{\Vol(H_2 \cap
  y_iU y_i^{-1}, H_2 )}=\frac{1}{\Vol(B^\times/O_F^\times,
  H_2 )}=\frac{1}{w(B)} 
\end{equation}
for each $1\leq i\leq r$.
It follows that  
\begin{equation}
  \label{eq:215}
\frac{1}{w(B)}\abs{H_2\bsh
  \whE_\scc^\sharp(\gamma_0, B)/U}=\Vol(\whE_\scc^\sharp(\gamma_0, B), H_2\bsh H). 
\end{equation}

On the other hand, we apply (\ref{eq:197}) and (\ref{eq:220}) to obtain 
\begin{equation}
  \label{eq:218}
  \begin{split}
\Vol( y_i U , H_2 \bsh H  )=\frac{1}{u(\calO)}\Vol(y_i(H_1 \cap U ),
H_2 \bsh H_1 ). 
  \end{split}
\end{equation}
Summing both sides over  $1\leq i\leq r$, we get 
\begin{equation}
  \label{eq:221}
  \begin{split}
&u(\calO)\Vol\left(\whE_\scc^\sharp(\gamma_0, B), H_2\bsh H\right)\\=&\Vol\left(\big(\whE_\scc(\gamma_0, B)\cap \grD^\times\whD^1\whO_F^\times\big)/\whO_F^\times,\, H_2\bsh H_1\right)    \\
=&\Vol\left(\big(\whE(\gamma_0, B)\cap \grD^\times\whD^1\whO_F^\times\big)/\whO_F^\times,\, H_2\bsh H_1\right).    
  \end{split}
\end{equation}
Here in
the last step we have plugged in the definition of
$\whE_\scc(\gamma_0, B)$ as in \eqref{eq:41}.

Let $\{\alpha_j\}_{j\in A}\subset D^\times$ be a complete set of representatives for
$K_0^\times\bsh \grD^\times/\grD^1$, where
$A:=F_+^\times/\Nr(K_0^\times)$ is regarded as an index
set. Then set $\{\gamma_j:=\alpha_j^{-1}\gamma_0\alpha_j\mid j \in A\}$ 
 forms a complete set of
representatives for $\conGab$. According to (\ref{eq:206}), we have 
\begin{equation}
  \label{eq:222}
\grD^\times\whD^1\whO_F^\times=\bigsqcup_{j\in A}
\big(K_0^\times\whO_F^\times\big)\alpha_j \whD^1. 
\end{equation}
It follows from  (\ref{eq:40}) and (\ref{eq:222}) that 
\begin{equation}
  \label{eq:6}
  \begin{split}
\whE(\gamma_0, B)\cap \grD^\times\whD^1\whO_F^\times&=
\bigsqcup_{j\in A} K_0^\times\whO_F^\times \alpha_j
\big(\whE(\alpha_j^{-1}\gamma_0\alpha_j, B)\cap \whD^1\big)\\
&= \bigsqcup_{j\in A}    K_0^\times\whO_F^\times \alpha_j
\wcEo(\gamma_j, B).  
  \end{split}
\end{equation}
For simplicity, let us write $\wcE^\dagger(\gamma_j, B)$ for the image
of $\wcEo(\gamma_j, B)$  in $H$, namely, \[\wcE^\dagger(\gamma_j,
  B):=\wcEo(\gamma_j, B)\whO_F^\times/\whO_F^\times\subseteq \whD^1\whO_F^\times/\whO_F^\times=\whD^\dagger.\]
Combining \eqref{eq:215},  \eqref{eq:221} and \eqref{eq:6}, we get 
\begin{equation}
  \label{eq:223}
  \begin{split}
&\frac{u(\calO)}{w(B)}\abs{H_2\bsh
  \whE_\scc^\sharp(\gamma_0, B)/U}
=\Vol\left(\big(\whE(\gamma_0, B)\cap
  \grD^\times\whD^1\whO_F^\times\big)/\whO_F^\times,\, H_2\bsh
  H_1\right)\\
=& \sum_{j\in A} \Vol(\alpha_j\wcE^\dagger(\gamma_j, B), H_2\bsh
  H_1)
= \sum_{j\in A} \Vol(\wcE^\dagger(\gamma_j, B),\,
(\alpha_j^{-1}K_0^\times\alpha_j/O_F^\times)\big \bsh H_1)\\
=&\sum_{\{\gamma\}\in \conGab} \Vol(\wcE^\dagger(\gamma, B),\, (K_\gamma^\times/
O_F^\times)\big\bsh H_1),  
  \end{split}
\end{equation}
where $K_\gamma^\times/O_F^\times$ is equipped with the
counting measure for every $\{\gamma\}\in \conGab$. Since
$\wcE^\dagger(\gamma, B)$ is contained in the open normal subgroup
$\whD^\dagger$ of $H_1$,   
\begin{equation}
      \Vol(\wcE^\dagger(\gamma , B),\,
(K_\gamma^\times/O_F^\times)\big\bsh H_1)
=\Vol(\wcE^\dagger(\gamma , B),\, (K_\gamma^\times\whO_F^\times)\big\bsh
  (K_\gamma^\times\whO_F^\times\whD^1)).
\end{equation}

A similar proof as that of Lemma~\ref{lem:adelic-intersection} shows
that $K_\gamma^\times\whO_F^\times\cap \whD^1=K_\gamma^1C$.
Thus we have a right $\whD^1$-equivariant bijection 
\begin{equation}
 (K_\gamma^\times \whO_F^\times)\bsh
   K_\gamma^\times\whO_F^\times\whD^1 \simeq (K_\gamma^1C)\bsh \whD^1,  
\end{equation}
which is also measure preserving once the Haar measure on
$K_\gamma^1C$  is normalized so that $\Vol(C)=1$. Indeed, the canonical
images of $\wcO^1$ on both sides have the same volume. 
Thus 
\begin{equation*}
\Vol(\wcE^\dagger(\gamma, B),\, (K_\gamma^\times \whO_F^\times)\bsh
  K_\gamma^\times\whO_F^\times\whD^1)=\Vol(\wcEo(\gamma, B), (K_\gamma^1C)\bsh \whD^1).
\end{equation*}
Since $\wcEo(\gamma, B)$ is left invariant under $K_\gamma^1C$, we have 
\begin{equation*}
  \begin{split}
\Vol(\wcEo(\gamma, B), (K_\gamma^1C)\bsh \whD^1)&=\Vol(K_\gamma^1\bsh (K_\gamma^1C))^{-1}\Vol(\wcEo(\gamma, B), K_\gamma^1\bsh \whD^1)\\&=2\Vol(\wcEo(\gamma, B), K_\gamma^1\bsh \whD^1).
  \end{split}
\end{equation*}
Combining the above calculations with \eqref{eq:223}, we obtain
(\ref{eq:1x}) and the proposition is proved. 
\end{proof}

\section*{Acknowledgments}
The authors would like to express their gratitude to Tomoyoshi
Ibukiyama,  John Voight and
Chao Zhang for stimulating discussions.  Xue is partially supported by the National Natural Science Foundation of China grant No.~12271410. Yu is partially supported
by the grant MoST 109-2115-M-001-002-MY3. The first draft 
manuscript was prepared during the first author's 2019 visit to Institute
of Mathematics, Academia Sinica. He thanks the institute for the warm
hospitality and great working conditions.


\appendix
\newcommand{\defi}[1]{\textsf{#1}} 

\newcommand{\C}{\mathbb{C}}
\newcommand{\PP}{\mathbb{P}}

\newcommand{\calOhat}{\widehat{\calO}}
\newcommand{\Bhat}{\widehat{B}}
\newcommand{\Rhat}{\widehat{R}}
\newcommand{\alphahat}{\widehat{\alpha}}

\newcommand{\frakp}{\mathfrak{p}}

\newcommand{\sbl}{{}_{\textup{\textsf{\tiny{L}}}}}
\newcommand{\sbr}{{}_{\textup{\textsf{\tiny{R}}}}}

\renewcommand{\i}{\text{\rm i}}
\newcommand{\inv}{^{-1}}

\newenvironment{enumalph}
{\begin{enumerate}\renewcommand{\labelenumi}{\textnormal{(\alph{enumi})}}}
	{\end{enumerate}}

\newenvironment{enumroman}
{\begin{enumerate}\renewcommand{\labelenumi}{\textnormal{(\roman{enumi})}}}
	{\end{enumerate}}

\numberwithin{equation}{subsection}

\theoremstyle{plain}
\newtheorem{jvthm}[equation]{Theorem}
\newtheorem{jvprop}[equation]{Proposition}
\newtheorem{jvlem}[equation]{Lemma} 
\newtheorem{jvcor}[equation]{Corollary}

\theoremstyle{remark}
\newtheorem{jvdefn}[equation]{Definition}
\newtheorem{jvrmk}[equation]{Remark}

\section{Polarized class sets of quaternion orders (by John Voight)}

In this appendix, we give an alternate proof of the class number formula for $\Cls^1 \calO$, replacing calculations with the Selberg trace formula with a direct, conceptual argument: interpreting this set as a \emph{polarized} class set.  The argument follows in the same way as the proof of the Eichler class number formula.

\subsection*{Acknowledgements}
We thank Jiangwei Xue and Chia-Fu Yu for their comments.  Voight was supported by a Simons Collaboration grant (550029). 

\subsection{Polarized class sets}

Throughout, we follow the notation in Voight \cite{voight-quat-book}.  Let $B$ be a totally definite quaternion algebra over a totally real field $F$; let $R$ be the ring of integers of $F$ and let $\calO \subseteq B$ be an $R$-order.

Let $I$ be an invertible right fractional $\calO$-ideal such that $[\nrd(I)]=[R] \in \Cl^+ R$, i.e., the fractional $R$-ideal $\nrd(I)$ is generated by a totally positive element in $F$.  

\begin{jvdefn}
A \defi{polarization} of $I$ is an element $\nu \in F_{>0}^\times$ such that $\nrd(I)=\nu R$.  A \defi{polarized} fractional right $\calO$-ideal is a pair $(I,\nu)$ where $\nu$ is a polarization of $I$.  
\end{jvdefn}

\begin{jvrmk}
Polarizations arise naturally in the definition of the \emph{Shimura class group} in the theory of complex multiplication; we offer it here, by analogy in the context of quaternionic multiplication.  
\end{jvrmk}

We define a left action of $B^\times$ on the set of polarized fractional right $\calO$-ideals by 
\begin{equation}
\beta \cdot (I,\nu) = (\beta I, \nrd(\beta)\nu). 
\end{equation} 
Let $\Cls^1 \calO$ be the set of equivalence classes under this action.

\begin{jvlem} \label{lem:pair}
Every class in $\Cls^1 \calO$ is represented by a pair of the form $(I',1)$.
\end{jvlem}

\begin{proof}
Given $(I,\nu)$, by the Hasse--Schilling theorem on norms \cite[Main Theorem 14.7.4]{voight-quat-book}, there exists $\beta \in B^\times$ such that $\nrd(\beta)=\nu$.  Thus $\beta^{-1}(I,\nu)=(\beta^{-1} I, 1)$.  
\end{proof}

Using Lemma \ref{lem:pair}, we deduce the following idelic description.  Let $\Rhat \colonequals R \otimes_{\Z} \widehat{\Z}$ be the profinite completion of $R$ and let $\widehat{F} \colonequals \Rhat \otimes_R F$ be the finite adeles of $F$; similarly define $\calOhat,\Bhat$, etc.

\begin{jvprop} \label{prop:cls1o}
Suppose that $\calO$ is locally norm maximal, i.e., $\nrd(\calOhat^\times)=\Rhat^\times$.  Then we have a natural bijection
\begin{equation} \label{eqn:cls1tob1}
\begin{aligned}
\Cls^1(\calO) &\leftrightarrow B^1 \backslash \Bhat^1 / \calOhat^1 \\
[(I,1)] &\mapsto B^1 \widehat{\alpha} \calOhat^1
\end{aligned}
\end{equation}
where $\widehat{I} = \widehat{\alpha} \calOhat$ (with $\widehat{\alpha} \in \Bhat^1$).
\end{jvprop}

\begin{proof}
First, we claim there is a natural commutative square as follows:
\begin{equation} \label{eqn:natcomsq}
\begin{aligned}
\xymatrix{
\{\textup{invertible right fractional $\calO$-ideals $I$ with $\nrd(I) = R$}\} \ar@{<->}[r] \ar@{^(->}[d] & \Bhat^1/\calOhat^1 \ar@{^(->}[d] \\
\{\textup{invertible right fractional $\calO$-ideals $I$}\} \ar@{<->}[r] & \Bhat^\times/\calOhat^\times }
\end{aligned}
\end{equation}
Indeed, on the bottom row, $I$ corresponds to $\alphahat \calOhat^\times$ via $\alphahat=(\alpha_\frakp)_\frakp$ where $I_\frakp = \alpha_\frakp \calO_\frakp$ for each prime $\frakp$.  Restricting as in the top row, since $\nrd(\calO_\frakp^\times) = R_\frakp^\times$ we may replace $\alpha_\frakp$ by $ \alpha_\frakp \mu_\frakp^{-1}$ where $\mu_\frakp \in \calO_\frakp^\times$ has $\nrd(\alpha_\frakp)=\nrd(\mu_\frakp)$, so that the local generator $\alpha_\frakp \in B_\frakp^1$ is now well-defined up to $\calO_\frakp^1$.  

To finish, we consider classes to define \eqref{eqn:cls1tob1}.
We have $[(I,1)]=[(I',1)] \in \Cls^1 \calO$ if and only if $I'= \beta I$ with $\beta \in B^1$, so the map $[(I,1)] \mapsto B^1 \alphahat \calOhat^1$ in \eqref{eqn:cls1tob1} with $\alphahat \in \Bhat^1$ as in the previous paragraph is well-defined.  The inverse map is $B^1 \alphahat \calOhat^1 \mapsto [(I,1)]$ where $I = \alphahat \calOhat \cap B$.  
\end{proof}

In particular, the naturality in \eqref{eqn:cls1tob1} allows us to transfer the stabilizer groups, preserving mass: the stabilizer of $[(I,1)]$ is (conjugate to) 
\begin{equation} \label{eqn:stabo1}
\calO\sbl(I)^1/\{\pm 1\}=(\widehat{\alpha} \calOhat^1 \widehat{\alpha}^{-1} \cap B^1)/\{\pm 1\},
\end{equation} 
the stabilizer of $B^1 \widehat{\alpha}\calOhat^1$.

Let $\Cls^{[R]} \calO \subseteq \Cls \calO$ be the subset of right fractional $\calO$-ideal classes with $\nrd([I])=[R] \in \Cl^+ R$.  Then there is a natural forgetful (surjective) map of sets 
\begin{equation} \label{eqn:cls1o}
\begin{aligned} 
\Cls^1(\calO) &\to \Cls^{[R]}(\calO) \\
[(I,\nu)] &\mapsto [I]. 
\end{aligned}
\end{equation}
The fiber above $[I]$, after choosing a representative $I$, is the set of isomorphism classes of polarizations on $I$.  This fiber is computed as follows.  We have $[(I,\nu)]=[(I,\nu')] \in \Cls^1 \calO$ if and only if there exists $\beta \in B^\times$ such that $\beta I = I$ and $\nu'=\nrd(\beta) \nu$.  The former is equivalent to $\beta \in \calO\sbl(I)^\times$, so the fiber is a principal homogeneous space for the group $R_{>0}^\times/\nrd(\calO\sbl(I)^\times)$, a group which is well-defined independent of choice of representative.  

By the naturality of \eqref{eqn:natcomsq}, the map \eqref{eqn:cls1o} gives a global interpretation for the natural idelic map 
\begin{equation}
\begin{aligned}
B^1 \backslash \Bhat^1 / \calOhat^1 &\to B^\times \backslash B^\times \Bhat^1 \calOhat^\times / \calOhat^\times \\
B^1 \alphahat \calOhat^1 &\mapsto B^\times \alphahat \calOhat^\times.
\end{aligned}
\end{equation}

%

\subsection{Polarized class number formula}

With Proposition \ref{prop:cls1o} in hand, we compute the cardinality of $\Cls^1 \calO$ following the same strategy that is used to compute the cardinality of $\Cls \calO$, the Eichler class number formula \cite[\S 30.8]{voight-quat-book}: we convert a formula for the mass by accounting for the cardinality of stabilizers \eqref{eqn:stabo1}.  More precisely, we define 
\begin{equation} 
\mass(\Cls^1 \calO) \colonequals \sum_{[(I,1)] \in \Cls^1 \calO} \frac{1}{[\calO\sbl(I)^1:\{\pm 1\}]},
\end{equation}
which is twice of $\Mass^1(\calO)$ defined in \eqref{eq:148}.
Then
\begin{equation} \label{eqn:calo1sub}
\#\Cls \calO^1 = \mass(\Cls^1 \calO) + \sum_{[(I,1)] \in \Cls^1 \calO} \left(1-\frac{1}{[\calO\sbl(I)^1:\{\pm 1\}]}\right). 
\end{equation}

We can express the terms in the sum in \eqref{eqn:calo1sub} as follows.  For a quadratic $R$-algebra $S$ in a CM-extension of $F$, we define the \defi{Hasse unit index} by 
\begin{equation}
Q(S) \colonequals [S^\times:S_{\textup{tors}}^\times R^\times]
\end{equation}
and we let $\zeta_{s}$ denote a primitive $s$th root of unity for $s \geq 1$.

\begin{jvprop} \label{prop:1wo1rs}
We have
\[ 1-\frac{1}{[\calO^1:\{\pm 1\}]} = \frac{[\nrd \calO^\times : R^{\times 2}]}{2} \sum_{\substack{q \geq 2 \\ [F(\zeta_{2q}) : F]=2}}\left(1-\frac{1}{q}\right) \sum_{\substack{S \subseteq F(\zeta_{2q}) \\ \# S_{\textup{tors}}^\times = 2q}} \frac{m(S,\calO;\calO^\times)}{Q(S)}. \]
\end{jvprop}

\begin{proof}
We follow Voight \cite[Proposition 30.8.5]{voight-quat-book}, counting off the elements of the group $\calO^1/\{\pm 1\}$ by maximal cyclic subgroups of some order $q \geq 2$.  
By essentially the same argument, we obtain
\[ [\calO^1:\{\pm 1\}] - 1 = \sum_{\substack{q \geq 2 \\ [F(\zeta_{2q}) : F]=2}} \sum_{\substack{S \subseteq F(\zeta_{2q}) \\ \# S_{\textup{tors}}^\times = 2q}} (q-1)m(S,\calO;\calO^\times) \frac{[\calO^\times:R^\times]}{2[S^\times : R^\times]} \]
where $m(S,\calO;\calO^\times)$ counts the number of optimal embeddings $\phi \colon S \hookrightarrow \calO$ up to conjugation by $\calO^\times$.  

We then make two substitutions to simplify.  First, from the exact sequence
\begin{equation} 
1 \to S_{\textup{tors}}^\times/R_{\textup{tors}}^\times \to S^\times/R^\times \to S^\times/(S_{\textup{tors}}^\times R^\times) \to 1 
\end{equation}
and $R_{\textup{tors}}^\times = \{\pm 1\}$, we have 
\begin{equation} 
[S^\times : R^\times] = Q(S) q. 
\end{equation}
Second, from the exact sequence
\begin{equation} 
1 \to \frac{\calO^1}{\{\pm 1\}} \to \frac{\calO^\times}{R^\times} \xrightarrow{\nrd} \frac{\nrd(\calO^\times)}{R^{\times 2}} \to 1 
\end{equation}
we get
\begin{equation} 
[\calO^\times:R^\times]=[\nrd \calO^\times : R^{\times 2}][\calO^1:\{\pm 1\}].
\end{equation}
Substituting these in and dividing by $[\calO^1 : \{\pm 1\}]$ gives the result.
\end{proof}

We now finish the polarized class number formula for
    $\mathrm{Cls}^1(\calO)$ following the same arguments as those for \cite[Theorem 30.8.6]{voight-quat-book}, recalling the setup for selectivity \cite[\S 31.1]{voight-quat-book}.  The \defi{spinor genus} $\SpnGen \calO \subseteq \Gen \calO$ of $\calO$ is the set of $R$-orders $\calO' \subseteq B$ such that $\widehat{\calO}' = \alphahat^{-1} \widehat{\calO} \alphahat$ for some $\alphahat \in B^\times \Bhat^1 \leq \Bhat^\times$.  
 
\begin{jvdefn} \label{def:spingen}
We say $\Gen \calO$ is \defi{spinor genial for
  $S$} if
  one of the following conditions holds:
  \begin{enumroman}
  \item $\Emb(S, \calO')=\emptyset$ for every order $\calO'\in
    \Gen \calO$; or
   \item every spinor genus $\SpnGen \calO'\subseteq \Gen \calO$ contains at
     least one order $\calO''\in \SpnGen\calO'$  with $\Emb(S, \calO'')\neq \emptyset$.
  \end{enumroman}
If $\Gen \calO$ is not spinor genial for $S$, then we say that it is \defi{spinor optimally selective} for $S$.  

If $\Gen \calO$ is spinor optimally selective for $S$, then for each spinor genus $\SpnGen\calO' \subseteq \Gen \calO$, then exactly one of the following holds:
\begin{itemize}
\item if $\Emb(S,\calO'') \neq \emptyset$ for some $\calO'' \in \SpnGen\calO'$, then we say \defi{$\SpnGen \calO'$ is selected by $S$}; 
\item otherwise, $\Emb(S, \calO'')=\emptyset$ for every order $\calO''\in \SpnGen \calO'$, and we say $\SpnGen \calO'$ is \defi{not selected by $S$}.
\end{itemize}
\end{jvdefn}

By strong approximation, condition (i) of Definition \ref{def:spingen} is equivalent to the existence of a prime $\frakp$ of $F$ such that $\Emb(S_\grp, \calO_\grp)=\emptyset$.
    

\begin{jvthm}
Suppose that $\calO$ is residually unramified.  Then
\[ \# \Cls^1 \calO = \mass (\Cls^1\calO) + \frac{1}{2 h(R)} \sum_{q \geq 2} \left(1-\frac{1}{q}\right) \sum_{\substack{S \\ \# S_{\textup{tors}}^\times=2q}} \delta(S,\calO) \frac{h(S)}{Q(S)} m(\widehat{S},\calOhat;\calOhat^\times) \]
where
\[ \delta(S,\calO) \colonequals 
\begin{cases}
1, & \text{if $\Gen \calO$ is spinor genial for $S$;} \\
2, & \text{\parbox{47ex}{\strut if $\Gen \calO$ is spinor optimally selective for $S$ and \\ $\SpnGen \calO$ is selected by $S$; \strut}} \\
0, & \text{\parbox{47ex}{\strut if $\Gen \calO$ is spinor optimally selective for $S$ and \\ $\SpnGen \calO$ is not selected by $S$. \strut}} 
\end{cases} \]
\end{jvthm}

\begin{proof}
Let
\[ w_{(I,\nu)}^1 = w_I^1 \colonequals \# \calO\sbl(I)^1/\{\pm 1\}. \]
We begin with
\begin{equation} \label{eqn:yup}
\# \Cls^1 \calO - \mass(\Cls^{1} \calO) = \sum_{[(I,\nu)] \in \Cls^1 \calO} \left(1-\frac{1}{w_{(I,\nu)}^1}\right).
\end{equation}
From the fiber count in \eqref{eqn:cls1o}, then substituting Proposition \ref{prop:1wo1rs}, we obtain 
\begin{equation}
\begin{aligned}
\sum_{[(I,\nu)] \in \Cls^1 \calO} \left(1-\frac{1}{w_{(I,\nu)}^1}\right) &= 
\sum_{[I] \in \Cls^{[R]} \calO} [R_{>0}^\times : \nrd(\calO\sbl(I)^\times)] \left(1-\frac{1}{w_{I}^1}\right)
\\
&= \frac{[R_{>0}^\times : R^{\times 2}]}{2} \sum_{q \geq 2}\left(1-\frac{1}{q}\right) \sum_{\substack{S \\ S_{\textup{tors}}^\times = 2q}} \sum_{[I] \in \Cls^{[R]} \calO} \frac{m(S,\calO\sbl(I);\calO\sbl(I)^\times)}{Q(S)}.
\end{aligned}
\end{equation}
We finish by substituting in the spinor trace formula (Proposition~\ref{prop:spinor-trace-formula}),
using the fact that $\#\Cl^+ R = [R_{>0}^{\times} : R^{\times 2}](\# \Cl R)$.
\end{proof}




\end{document}